\documentclass{article}
\usepackage[noadjust]{cite}

\usepackage{xcolor}
\usepackage{amsthm}
\usepackage{amsfonts}
\usepackage{amssymb}
\usepackage{amsgen}
\usepackage{amsmath}
\usepackage{amsopn}
\usepackage{verbatim}
\usepackage{xypic}
\usepackage{pgf}
\usepackage{xspace}
\usepackage{multicol}
\usepackage{makeidx}
\usepackage{eepic}
\usepackage{upref}
\usepackage{pgf}
\usepackage{tikz}
\usepackage[normalem]{ulem}
\usepackage{shuffle,yfonts}
\DeclareFontFamily{U}{shuffle}{}
\DeclareFontShape{U}{shuffle}{m}{n}{ <-8>shuffle7 <8->shuffle10}{}

\allowdisplaybreaks

\newcommand{\FES}{\mathsf {FES}}

\newcommand{\FMTV}{\mathsf {FMT}}

\newcommand{\SMSV}{\mathsf {SMS}}

\newcommand{\ES}{\mathsf {ES}}

\newcommand{\MTV}{\mathsf {MT}}

\newcommand{\SMTV}{\mathsf {SMT}}
\newcommand{\sha}{\shuffle}

\newcommand{\Sy}{{\mathcal S}}

\newcommand{\ga}{\alpha}

\newcommand\om{{\omega}}
\newcommand\tom{{\widetilde{\omega}}}

\newcommand\eps{{\varepsilon}}
\newcommand{\bfk}{{\bf k}}
\newcommand{\bfl}{{\bf l}}

\newcommand{\bfs}{{\boldsymbol{\sl{s}}}}

\newcommand\bfeps{{\boldsymbol \varepsilon}}

\newcommand{\calA}{\mathcal{A}}

\newcommand{\calP}{\mathcal{P}}

\allowdisplaybreaks

\newcommand{\N}{\mathbb{N}}
\newcommand{\Z}{\mathbb{Z}}
\newcommand{\Q}{\mathbb{Q}}

\newcommand{\ol}{\overline}

\def\ppmod#1{{\ (\rm{mod}\ 2)}}

\theoremstyle{plain}
\newtheorem{thm}{Theorem}[section]
\newtheorem{lem}[thm]{Lemma}
\newtheorem{conj}[thm]{Conjecture}

\newtheorem{cor}[thm]{Corollary}
\newtheorem{prop}[thm]{Proposition}

\theoremstyle{definition}

\newtheorem{rem}[thm]{Remark}

\begin{document}
\title{Finite and Symmetric Multiple $T$-Values}

\author{Aaron Cheng\footnote{Email: aaron.cheng.28@bishops.com.} and Jianqiang Zhao\footnote{Email: zhaoj@ihes.fr, corresponding author.}\\
\ \\
Department of Mathematics, The Bishop's School, La Jolla, CA 92037, USA}

\date{}

\maketitle

\medskip
\noindent
\textbf{Abstract.}
The multiple $T$-values (MTVs), first studied by Kaneko and Tsumura, are a variation of the multiple zeta values (MZVs) with restricted product structure. Motivated by a deep conjecture of Kaneko and Zagier relating finite MZVs and symmetric MZVs, which was extended to Euler sums by Zhao, we study finite and symmetric multiple $T$-values. In particular, we show that finite MTVs satisfy Hoffman-type duality relations at low height, confirming several conjectures of the second author and discovering new families of identities. In proving relations among symmetric MTVs, our work builds on Xu and Zhao's theory of multiple mixed values, the relations of double zeta values discovered by Gangl, Kaneko and Zagier, and the (weighted) sum formulas of double Euler sums discovered by Berger et al. We then use generating functions derived from the integral structure of MTVs to establish the corresponding relations for finite MTVs. These results aid in the computation of dimensions of the $\Q$-vector space spanned by finite and symmetric MTVs (modulo $\zeta(2)$ products), providing strong evidence for an isomorphism between the two spaces.

\medskip
\noindent
\textbf{Keywords.}
(finite) Euler sums; multiple mixed values; (finite) multiple $T$-values; symmetric multiple $T$-values; multiple $S$-values.

\medskip
\noindent
\textbf{2020 Mathematics Subject Classification.} 11M32; 11B68; 68W30.

\section{Introduction and some preliminary results}
\subsection{Multiple zeta value and Euler sums}
Euler's study of double zeta values is well-documented in his correspondence with Goldbach in the mid-18th century. After over 200 years of dormancy, their study was revived by Hoffman \cite{Hoffman1992} and Zagier \cite{Zagier1994} independently in the early 1990s. For any $d$-tuples of positive integers $\bfs=(s_1,\dots,s_d)\in\N^d$ with $s_1>1$, one defines the \emph{multiple zeta value} (MZV)
\begin{equation}\label{defn:MZV}
\zeta(\bfs):=\sum_{n_1>\dots>n_d>0}  \frac{1}{n_1^{s_1}\cdots n_d^{s_d}}.
\end{equation}
We call such $\bfs$ an admissible index. If $s_1=1$ then it is non-admissible because the series diverges in this case. For convenience, we call $d$ the depth and $|\bfs|:=s_1+\dots+s_d$ the weight. It is easy to see from \eqref{defn:MZV} that the product of two MZVs can be expressed as a sum of MZVs of the same weight but possibly different depths, which we call the stuffle relations. For example,
\begin{equation*}
    \zeta(2)\zeta(3)=\zeta(2,3)+\zeta(3,2)+\zeta(5).
\end{equation*}

One of the most important properties of MZVs is that they possess iterated integral expressions. Kontsevich first noticed that
for all admissible $\bfs$,
\begin{equation}\label{equ:MZVitIntegral}
\zeta(\bfs)=\int_0^1  \left(\frac{dt}{t}\right)^{s_1-1}\frac{dt}{1-t} \cdots \left(\frac{dt}{t}\right)^{s_d-1}\frac{dt}{1-t}.
\end{equation}
See \cite[\S 2.1]{Zhao2016} for more details. Using Chen's theory of iterated integrals developed in \cite{KTChen1971,KTChen1977}, one sees that the product of two MZVs can be expressed as a sum of MZVs of the same weight and \emph{same depth} according to the shuffle product relations of iterated integrals.

Although Euler mistakenly treated divergent double zeta values corresponding to non-admissible indices
as if they converged, his method and ideas were generalized to multiple zeta values and made rigorous in the influential work \cite{IKZ2006} of Ihara, Kaneko, and Zagier. In particular, they discovered two ways to regularize divergent MZVs: one by the definition of the series in \eqref{defn:MZV} using the stuffle product, and the other by the integral expression in \eqref{equ:MZVitIntegral} using the shuffle product.

All of the above have been generalized to higher levels by Racinet \cite{Racinet2002} by inserting powers of the $N$-th roots of unity in the numerators of \eqref{defn:MZV}.
In particular, when $N=2$ these are sometimes called Euler sums or alternating MZVs. For $\bfs=(s_1,\dots,s_d)\in\N^d$ and
$\bfeps=(\eps_{1}, \dots,\eps_d)\in\{\pm 1\}^d$, one defines
\begin{equation}\label{defn:ES}
\zeta(\bfs;\bfeps):=\sum_{n_1>\dots>n_d>0}  \frac{\eps_1^{n_1}\cdots \eps_d^{n_d}}{n_1^{s_1}\cdots n_d^{s_d}}.
\end{equation}
For simplicity, one often omits $\bfeps$ with the standard short-hand notation of writing $\ol{s_j}$ if $\eps_j=-1$. For example,
the second author proved the following identity in \cite{Zhao2010a}:
\begin{equation}\label{equ:2bar1}
  8^\ell   \zeta(\{\bar2,1\}^\ell)= 8^\ell  \zeta(\{2,1\}^\ell;\{-1,1\}^\ell)=\zeta(\{3\}^\ell)
\end{equation}
for all $\ell\in\N$, where $\{\bfs\}^\ell$ means that the string $\bfs$ repeats exactly $\ell$ times. Moreover, if $s_1=\eps_1=1$
then, similar to MZVs, there are two (generally different) regularized values: $\zeta_\ast(\bfs;\bfeps)$ by the stuffle product and $\zeta_\sha(\bfs;\bfeps)$ by the shuffle product (see \cite[\S 13.3]{Zhao2016}).

\subsection{A key conjecture of Euler sums}
We can further define the so-called finite Euler sums $\zeta_\calA(\bfs;\bfeps)$ as follows. For any $\bfs\in\N^d$ and $\bfeps\in\{\pm 1\}^d$ we define
\begin{equation*}
\zeta_\calA(\bfs;\bfeps):=\left( \sum_{p>n_1>\dots>n_d>0}  \frac{\eps_1^{n_1}\cdots \eps_d^{n_d}}{n_1^{s_1}\cdots n_d^{s_d}} \pmod{p}\right)_{p: \calP}\in \calA:=\frac{\prod_{p\in\calP} \Z/p\Z }{\sum_{p\in\calP} \Z/p\Z}
\end{equation*}
where $\calP$ is the set of primes. These infinite tuples then form a $\Q$-vector space denoted by $\FES$. The second author proposed the following key conjecture
(see \cite[Conjecture~8.6.9]{Zhao2016}) by extending a similar conjecture of Kakeko and Zagier (see \cite[Main Conjecture]{KanekoZa2013}).

\begin{conj}\label{conj:KanekoZagierAltVersion}
For any $w\in\N$, let $\FES_{w}$ (resp.\ $\ES_w$) be the $\Q$-vector space generated by
all finite Euler sums (resp.\ Euler sums) of weight $w$. Then, there is an isomorphism:
\begin{align*}
f_\ES: \FES_{w} & \longrightarrow \frac{\ES_w}{\zeta(2)\ES_{w-2}} , \\
 \zeta_{\calA}(\bfs;\bfeps) & \longmapsto \zeta_\sha^\Sy(\bfs;\bfeps)  \pmod{\zeta(2)}
\end{align*}
where
\begin{align*}
\zeta_\sha^\Sy(\bfs;\bfeps):=&\sum_{i=0}^d
\left(\prod_{j=1}^i (-1)^{s_j} \eps_j \right) \zeta_\sha(s_i,\dots,s_1;\eps_i,\dots,\eps_1) \zeta_\sha(s_{i+1},\dots,s_d;\eps_{i+1},\dots,\eps_d)
\end{align*}
where $\zeta_\sha(\bfs;\bfeps)$ is the shuffle-regularized value (which is a non-constant polynomial if $s_1=\eps_1=1$ and is equal to $\zeta(\bfs;\bfeps)$ otherwise).
\end{conj}
From \cite[Prop. 8.6.8]{Zhao2016} we know that the symmetric Euler sum $\zeta_\sha^\Sy(\bfs;\bfeps)$ is always a constant real number. Moreover,
one can replace $\zeta_\sha^\Sy(\bfs;\bfeps)$ by the stuffle regularized value $\zeta_\ast^\Sy(\bfs;\bfeps)$
since the difference always lies in $\zeta(2)\ES_{w-2}$.

\subsection{Parity variations of multiple zeta values}
In recent years, some variations of MZVs have been defined by restricting the parity patterns of the summation indices in \eqref{defn:MZV}.
In \cite{KanekoTs2020}, Kaneko and Tsumura introduced \emph{multiple $T$-values} (MTVs), defined by
\begin{align}\label{defn:MTV}
T(\bfs):= \sum_{\substack{n_1>\dots>n_d>0\\ n_j\equiv d-j+1\ppmod{2}}}  \frac{2^d}{n_1^{s_1}\cdots n_d^{s_d}},
\end{align}
for all admissible $\bfs=(s_1,\dots,s_d)\in\N^d$. It is nontrivial to see that MTVs can be expressed using iterated integrals
\begin{equation}\label{equ:MTVitIntegral}
T(\bfs)=\int_0^1  \left(\frac{dt}{t}\right)^{s_1-1}\frac{2dt}{1-t^2} \cdots \left(\frac{dt}{t}\right)^{s_d-1}\frac{2dt}{1-t^2}.
\end{equation}
Furthermore, they satisfy both the duality relations (see \cite[Thm. 3.1]{KanekoTs2020}) and the integral shuffle relations (see \cite[Thm. 2.1]{KanekoTs2020}), which is the main motivation to define MTVs in the first place.
Motivated by this, Xu and the second author \cite{XuZhao2020a} defined \emph{multiple $S$-values} (MSVs) by
\begin{align*}
S(\bfs):= \sum_{\substack{n_1>\dots>n_d>0\\ n_j\equiv d-j\ppmod{2}}} \frac{2^d}{n_1^{s_1}\cdots n_d^{s_d}},
\end{align*}
i.e., the parity pattern of the summation indices is exactly opposite to that of MTVs. Furthermore, we have developed  a general theory for arbitrary parity patterns in \cite{XuZhao2020a,XuZhao2020c} by defining the multiple mixed values
\begin{equation}\label{defn:MMV}
M(\bfs;\bfeps):=\sum_{m_1>\cdots>m_d>0} \prod_{j=1}^d \frac{(1+\eps_j(-1)^{m_j})}{m_j^{s_j}}
            =\sum_{\substack{n_1>\cdots>n_d\\ 2| n_j \text{ if } \eps_j=1 \\ 2\nmid n_j \text{ if } \eps_j=-1}}  \frac{2^d}{n_1^{s_1}\cdots n_d^{s_d}}
\end{equation}
for all admissible $\bfs\in\N^d$ and $\bfeps\in\{\pm 1\}^d$.
According to \cite[(2.1)]{XuZhao2020a}\footnote{Notice that the order of summation indices in the definition of multiple mixed value in \cite{XuZhao2020a} is increasing, which is opposite to the current paper.}, if $(s_1,\eps_1)\ne (1,1)$ then
\begin{equation*}
M(s_1,\dots,s_d;\eps_1,\eps_2,\dots,\eps_d)= \int_0^1 \om_0^{s_1-1}\om_{\eps_1\eps_2}\cdots\om_0^{s_{d-1}-1}\om_{\eps_{d-1}\eps_d}\om_0^{s_d-1}\om_{\eps_d}
\end{equation*}
where
\begin{align*}
 \om_0=\frac{dt}{t}, \quad \om_1=\frac{2t\, dt}{1-t^2}, \quad \om_{-1}=\frac{2dt}{1-t^2}.
\end{align*}
The following relations for multiple mixed values generalize the duality relations for MTVs \cite[Thm. 3.1]{KanekoTs2020}.

\begin{thm} \label{thm:dualMMVo}
Let $\bfk=(k_1,\ldots,k_r)\in\N^r,\bfl=(l_1,\ldots,l_r)\in\N^r$ and $\bfeps\in\{\pm 1\}^r$.
Then
\begin{equation}\label{equ:dualMMV}
 \int_0^1 \om_0^{k_r} \om_{\eps_r}^{l_1} \cdots \om_0^{k_r}\om_{\eps_1}^{l_1}
=\int_0^1  u_{\eps_1}^{l_1} \om_{-1}^{k_1}  \cdots u_{\eps_r}^{l_r} \om_{-1}^{k_r}  ,
\end{equation}
where $u_{-1}=\om_0$ and $u_1=\om_0+\om_1-\om_{-1}$.
\end{thm}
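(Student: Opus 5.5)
The plan is to prove \eqref{equ:dualMMV} by the classical duality substitution for iterated integrals on $[0,1]$: the involution $t\mapsto \frac{1-t}{1+t}$ that Kaneko and Tsumura use for MTVs. I read the left-hand side as $\int_0^1 \om_0^{k_r}\om_{\eps_r}^{l_r}\cdots\om_0^{k_1}\om_{\eps_1}^{l_1}$; the repeated indices $k_r,l_1$ in the displayed statement look like typos.

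\emph{Convergence.} The left-hand integral begins with $\om_0$, since $k_r\ge1$, so it converges at $t=1$. It ends with $\om_{\eps_1}$; both $\om_1$ and $\om_{-1}$ are regular at $t=0$, so it converges there too. The right-hand side begins with $u_{\eps_1}$, which is regular at $1$: $u_{-1}=\om_0$, and $u_1=\om_0+\om_1-\om_{-1}=\om_0-\frac{2\,dt}{1+t}$. It ends with $\om_{-1}$, which is regular at $0$. Hence both sides are convergent iterated integrals and no regularization is needed.

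\emph{Step 1: pull back the one-forms.} Put $t=\phi(x)=\frac{1-x}{1+x}$, so $\phi$ swaps $0$ and $1$ and $dt=-\frac{2\,dx}{(1+x)^2}$. Direct computation gives:
\begin{itemize}
\item $\phi^*\om_0=d\log\frac{1-x}{1+x}=-\frac{2\,dx}{1-x^2}=-\om_{-1}$;
\item since $1-t^2=\frac{4x}{(1+x)^2}$, $\phi^*\om_{-1}=-\frac{dx}{x}=-\om_0=-u_{-1}$;
\item $\phi^*\om_1=\phi^*(t\,\om_{-1})=-\frac{1-x}{1+x}\frac{dx}{x}=-\frac{dx}{x}+\frac{2\,dx}{1+x}$.
\end{itemize}
Using the identity $\om_{-1}-\om_1=\frac{2\,dt}{1+t}$, the last line becomes $\phi^*\om_1=-(\om_0+\om_1-\om_{-1})=-u_1$. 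In short,
\[
\phi^*\om_0=-\om_{-1},\qquad \phi^*\om_{\eps}=-u_\eps \quad(\eps=\pm1).
\]

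\emph{Step 2: apply the change of variables.} For an iterated integral $\int_0^1\alpha_1\cdots\alpha_n$ and a diffeomorphism $\phi$ reversing $[0,1]$, the standard formula is
\[
\int_0^1\alpha_1\cdots\alpha_n=\int_1^0\phi^*\alpha_1\cdots\phi^*\alpha_n=(-1)^n\int_0^1\phi^*\alpha_n\cdots\phi^*\alpha_1 .
\]
The second equality is the path-reversal property of Chen's iterated integrals. Each of the $n$ pulled-back forms carries a factor $-1$, and these cancel the $(-1)^n$ exactly. Reversing the word $\om_0^{k_r}\om_{\eps_r}^{l_r}\cdots\om_0^{k_1}\om_{\eps_1}^{l_1}$ and substituting the pullbacks from Step 1 gives $u_{\eps_1}^{l_1}\om_{-1}^{k_1}\cdots u_{\eps_r}^{l_r}\om_{-1}^{k_r}$, which is the right-hand side.

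\emph{Main obstacle.} The only delicate points are bookkeeping:
\begin{itemize}
\item getting the pullback of $\om_1$ right, which is exactly why $u_1$ is the combination $\om_0+\om_1-\om_{-1}$ rather than a single form;
\item checking that the sign from path reversal cancels the $n$ minus signs;
\item justifying the change of variables for the improper integrals at the endpoints.
\end{itemize}
For the last point I would truncate to $[\delta,1-\delta]$, apply the substitution on the compact interval, and pass to the limit $\delta\to0$. This limit is legitimate because both words satisfy the convergence conditions checked at the start.
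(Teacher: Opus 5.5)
Your proposal is correct and follows the paper's proof, which consists solely of the substitution $t\to\frac{1-t}{1+t}$. You supply the details it omits: the pullbacks $\phi^*\om_0=-\om_{-1}$ and $\phi^*\om_{\eps}=-u_{\eps}$, the cancellation of the $n$ signs against path reversal, the convergence check, and the intended reading $\om_0^{k_r}\om_{\eps_r}^{l_r}\cdots\om_0^{k_1}\om_{\eps_1}^{l_1}$ of the left-hand side. As a minor simplification, every form involved, including $u_1=\frac{(1-t)\,dt}{t(1+t)}$, is positive on $(0,1)$, so Tonelli justifies the change of variables on the open simplex directly and no truncation is needed.
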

\begin{proof}
This follows immediately from the substitution $t\to \frac{1-t}{1+t}$.
\end{proof}

We note that Theorem \ref{thm:dualMMVo} slightly extends \cite[Thm. 2.13]{XuZhao2020a} by allowing $\eps_1=1$. Note that
$\int_0^1 (\om_1-\om_{-1}) \cdots \om_{-1}^{k_r}$ always converges.

\subsection{Finite and symmetric multiple $T$-values}
In \cite{Zhao2023Dec,Zhao2024a}, guided by the set-up for MZVs and Euler sums, one of us defined and studied various finite and symmetric MSVs and MTVs. The finite versions $S_\calA$ and $T_\calA$ are defined in the exact same way as finite Euler sums: by truncating the defining series at every prime $p$, and then forming infinite tuples indexed by $p$ such that $S_\calA(\bfs),T_\calA(\bfs)\in\calA$ for all index $\bfs$ (non-admissible ones are included).

The symmetric MSVs and MTVs are defined by using two types of regularized MSVs and MTVs $F_\ast(\bfs)$ and $F_\sha(\bfs)$ ($F=S$ or $T$). More precisely, for any $\bfs=(s_1,\dots,s_d)\in\N^d$ (not necessarily admissible) one can define the symmetric MTV
\begin{equation*}
T_\sharp^\Sy(\bfs):=
\left\{
  \begin{array}{ll}
  \displaystyle \sum_{i=0}^d  \Big(\prod_{\ell=1}^i (-1)^{s_\ell} \Big) T_\sharp(s_i,\dots,s_1)T_\sharp(s_{i+1},\dots,s_d),\quad & \quad \hbox{if $d$ is even;} \\
  \displaystyle \sum_{i=0}^d  \Big(\prod_{\ell=1}^i (-1)^{s_\ell} \Big) S_\sharp(s_i,\dots,s_1)T_\sharp(s_{i+1},\dots,s_d), \quad&\quad \hbox{if $d$ is odd,}
  \end{array}
\right.
\end{equation*}
where, $\sharp=\ast$ or $\sha$, and as usual, $\prod_{\ell=1}^0=1$. $S_\sharp^\Sy(\bfs)$ is defined by the exchange of $S$ and $T$ on the right-hand side of the above.

The main goal of this paper is to prove some families of relations that hold for both finite and symmetric MTVs. For example, the following results follow from Theorem~\ref{thm:SMT121} and Corollary~\ref{cor:TA21_k} since $T_\sha^\Sy(n)=(2-2^{2-n})/(2-2^{1-n})T(n)$ for any positive integer $n>1$.
\begin{thm}
For any odd positive integer $k$ and any nonnegative integer $\ell\le k$, we have
\begin{align*}
T_\sha^\Sy(\{1\}^\ell,2,\{1\}^{k-\ell})=\frac{(-1)^{\ell}}{\ell+1} \binom{k+1}{\ell}  T_\sha^\Sy(2,\{1\}^k)= (-1)^{\ell} \binom{k+2}{\ell+1} \frac{(2^{k+2}-1)}{2(2^{k+1}-1)} T_\sha^\Sy(k+2),\\
T_\calA(\{1\}^\ell,2,\{1\}^{k-\ell})=\frac{(-1)^{\ell}}{\ell+1} \binom{k+1}{\ell}  T_\calA(2,\{1\}^k)= (-1)^{\ell} \binom{k+2}{\ell+1} \frac{(2^{k+2}-1)}{2(2^{k+1}-1)}T_\calA(k+2).
\end{align*}
\end{thm}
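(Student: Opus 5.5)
The statement has two parts, and I would prove them separately but in parallel. Each part splits into two claims: an ``internal'' relation expressing $T(\{1\}^\ell,2,\{1\}^{k-\ell})$ as a multiple of $T(2,\{1\}^k)$, and a depth-one evaluation of $T(2,\{1\}^k)$ in terms of $T(k+2)$. The paper announces that these come from Theorem~\ref{thm:SMT121} (symmetric side) and Corollary~\ref{cor:TA21_k} (finite side), together with the identity $T_\sha^\Sy(n)=(2-2^{2-n})/(2-2^{1-n})\,T(n)$. So the plan is to establish those two results and then do the bookkeeping.

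\textbf{Symmetric side.} I would write $T_\sha^\Sy(\{1\}^\ell,2,\{1\}^{k-\ell})$ out by definition. Since the depth is $k+1$, which is even, every term is a product $T_\sha(\cdots)T_\sha(\cdots)$. Each factor is a shuffle-regularized value whose index contains only $1$'s and a single $2$. Through the iterated integral \eqref{equ:MTVitIntegral}, these are integrals of words in $\om_0$ and $\om_{-1}$ with exactly one $\om_0$. Such integrals have a closed generating function. For a word $\om_{-1}^a\om_0\om_{-1}^b$, the integral is essentially
\[
\frac{1}{a!\,b!}\int_0^1 \log^a\!\Big(\frac{1+t}{1-t}\Big)\Big|_{\text{reg}}\cdots\,\frac{dt}{t}.
\]
Forming the two-variable generating function $\sum_{a,b} T_\sha(\{1\}^a,2,\{1\}^b)x^ay^b$ and using the regularization $T_\sha(1)=0$ should give an expression in terms of Beta-type functions. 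I would then assemble the alternating sum defining $T^\Sy_\sha$ into a single generating function in $(x,y)$ with $x$ marking $\ell$. The target claim is that this generating function depends on $x,y$ only through $(y-x)$-type combinations, which would yield the binomial factor $(-1)^\ell\binom{k+2}{\ell+1}$. Comparing the coefficient at $\ell=0$ with the known depth-one value then gives the constant $\frac{2^{k+2}-1}{2(2^{k+1}-1)}$, after converting $T(k+2)=2(1-2^{-k-2})\zeta(k+2)$. I expect the oddness of $k$ to enter through the cancellation of even-weight zeta terms modulo $\zeta(2)$.

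\textbf{Finite side.} I would compute $T_\calA$ directly from the truncated sums. Summing first over the parity-restricted indices sandwiching the single exponent $2$, the generating function $\sum_k T_\calA(\{1\}^\ell,2,\{1\}^{k-\ell})$ becomes a truncated product $\prod(1+x/n)$ over odd and even $n<p$. Taking coefficients mod $p$, and using $n\mapsto p-n$ (which swaps parity), gives the duality reversing the string of ones and produces the binomial relation. The depth-one evaluation $T_\calA(2,\{1\}^k)\propto T_\calA(k+2)$ reduces to congruences for power sums over odd and over even $n<p$. Those come from Fermat quotients and Bernoulli numbers, and the factor $(2^{k+2}-1)/(2^{k+1}-1)$ appears naturally there.

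\textbf{Main obstacle.} I expect the hardest step to be the symmetric-side generating-function identity. That requires handling the regularized terms $T_\sha(\{1\}^a)$ and also the $S_\sha$ factors that occur in the odd-depth pieces of lower-depth subproducts. I would first verify the claim numerically for $k=1,3$ to fix the normalization.
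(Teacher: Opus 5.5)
\textbf{Finite side: this is a genuine gap.} Your proposed mechanism does not work as described.

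The condition $n_j\equiv d-j+1 \pmod 2$ couples consecutive summation indices. So the truncated sums are not coefficients of a scalar product $\prod(1+x/n)$ over odd and even $n<p$; such a product counts arbitrary subsets, not parity-alternating chains. The involution $n\mapsto p-n$ gives only the reversal relation $T_\calA(\bfs)=(-1)^{|\bfs|}T_\calA(\overleftarrow{\bfs})$ in even depth. That relation swaps $\ell$ and $k-\ell$, so it cannot produce the coefficient $\frac{(-1)^\ell}{\ell+1}\binom{k+1}{\ell}$ for intermediate $\ell$. Your phrase ``reduces to power-sum congruences'' also skips the real difficulty, which is lowering the depth of the height-one value $T_\calA(2,\{1\}^k)$.

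In the paper, the binomial relation is Proposition~\ref{prop:FMT1ell2lk}, quoted from earlier work. The evaluation is Corollary~\ref{cor:TA21_k}: Theorem~\ref{thm:v-DualReversal} gives $T_\calA(2,\{1\}^k)=(-1)^kT_\calA(1,k+1)$, and this is combined with known values of $T_\calA(1,k+1)$ and $T_\calA(k+2)=2(1-2^{-1-k})\beta_{k+2}$. The key tool is Lemma~\ref{lem:F(x)}. It starts from the integral representation and uses $t\mapsto\frac{1-t}{1+t}$, then $t\mapsto e^t$, then $\frac{2te^t}{e^{2t}-1}=\sum_j B_j(\frac12)\frac{(2t)^j}{j!}$. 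The result writes $T_\calA(\bfs,\{1\}^k)_p$ as a $B_j(\frac12)$-weighted sum of $T_\calA(\bfs-1,\{1\}^{k+j})_p$. This is matched against the Seki--Bernoulli reduction of Lemma~\ref{lem:S(x)} and the unit evaluations $T_\calA(\{1\}^m)_p=T_\calA(m)_p$ (Theorem~\ref{thm:unitTA}, Remark~\ref{rem:unitT-oddWt}).

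For odd $k$, applying Lemma~\ref{lem:F(x)} directly to $\bfs=(\{1\}^\ell,2)$ already gives the whole finite statement. Odd-$j$ terms die because $B_j(\frac12)=0$. The even-length unit values $T_\calA(\{1\}^{k+1+j})_p$ with $k+1+j<p-1$ vanish. Only the Wilson term $T_\calA(\{1\}^{p-1})_p\equiv-1$ survives, and it yields $(-1)^\ell\binom{k+2}{\ell+1}(2-2^{-1-k})\beta_{k+2}$. Nothing in your plan plays the role of this depth-lowering lemma.

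\textbf{Symmetric side: underdeveloped, with two wrong expectations.} A generating function could work here, and the ``depends only on $y-x$'' phenomenon is real, but you leave it as a hope. First, the depth $k+1$ is even, so only $T_\sha\cdot T_\sha$ products occur; the $S_\sha$ factors you list as an obstacle never appear. Second, the identity is exact, and the oddness of $k$ enters through a sign, not through a cancellation modulo $\zeta(2)$.

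Concretely, put $n=k-\ell$. The defining sum splits at the $2$ as $\sigma(\ell,n)-\sigma(n,\ell)$, where $\sigma(\ell,n)=\sum_{j=0}^{\ell}(-1)^jT_\sha(\{1\}^j)T_\sha(\{1\}^{\ell-j},2,\{1\}^n)$ and the minus sign is $(-1)^k=-1$. Use $aU\sha bV=a(U\sha bV)+b(aU\sha V)$. Then $\sum_j(-1)^j\om_{-1}^j\sha\om_{-1}^{\ell-j}\om_0\om_{-1}^{n+1}$ telescopes to $(-1)^\ell\om_0(\om_{-1}^\ell\sha\om_{-1}^{n+1})=(-1)^\ell\binom{k+1}{\ell}\om_0\om_{-1}^{k+1}$. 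This is a convergent word, so no regularized constants survive. Pascal's rule then gives $(-1)^\ell\binom{k+2}{\ell+1}T(2,\{1\}^k)$.

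To reach $T_\sha^\Sy(k+2)$ you still need the Kaneko--Tsumura duality $T(2,\{1\}^k)=T(k+2)$. You also need $T(k+2)=(2-2^{-1-k})\zeta(k+2)$ and $T_\sha^\Sy(k+2)=T(k+2)-S(k+2)=2(1-2^{-1-k})\zeta(k+2)$. Your plan never invokes the duality, and ``comparing with the known depth-one value'' does not substitute for it.
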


This was motivated by \cite[Prop. 3]{Zhao2023Dec}, in which only the first equation for finite MTVs in the second line above was proven.
In view of similar results and conjectures we have obtained so far, in which both finite and symmetric MTVs satisfy essentially the same relations, we now formulate a modified version of Conjecture~\ref{conj:KanekoZagierAltVersion}.

\begin{conj}\label{conj:KanekoZagierMTV}
For any $w\in\N$, let $\SMTV_{w}$ (resp.\ $\FMTV_w$) be the $\Q$-vector space generated by
all symmetric MTVs (resp.\ finite MTVs) of weight $w$. Then, there is an isomorphism between $\Q$-vector spaces:
\begin{align*}
f_\MTV: \FMTV_{w} & \longrightarrow \frac{\SMTV_w}{\zeta(2)\ES_{w-2}\cap \SMTV_w},  \\
 T_{\calA}(\bfs) & \longmapsto T_\sha^\Sy(\bfs).
\end{align*}
\end{conj}

In the appendix, we investigate the space generated by symmetric MTVs and the spaces appearing in Conjecture~\ref{conj:KanekoZagierMTV} for weight up to 10. By numerical computation, we are able to verify this conjecture in this range.

\section{Unit finite and symmetric multiple $T$-values}
An index of the form $(1,\dots,1)$ is called a unit index. Through extensive numerical computation, we have found in \cite{Zhao2023Dec} that the following conjecture concerning such indices should be true.

\begin{conj}\label{conj:unitT} \emph{(\cite[Conj. 5]{Zhao2023Dec})}
Let $\ES_{w}^{(1)}$ be the space generated $\zeta(w)$. For all $k\in\N$ with $k\ge 2$, we have
\begin{equation*}
  T_\calA(\{1\}^k)=\frac{2^{k-1}-1}{2^{k-2}} \beta_k, \qquad
  T_\sha^\Sy(\{1\}^k)\equiv \frac{2^{k-1}-1}{2^{k-2}} \zeta(k) \pmod{\zeta(2)\ES_{k-2}^{(1)}},
\end{equation*}
where $\beta_k=\big(B_{p-k}/k \pmod{p}\big)_{p>k,p\in \calP}$ and $B_n$'s are Bernoulli numbers defined by
\begin{equation*}
    \frac{t}{e^t-1}=\sum_{n\ge 0} B_n \frac{t^n}{n!}.
\end{equation*}
\end{conj}
We will prove the above conjecture in this section by dealing with the finite and the symmetric versions separately.

\begin{rem} (a). Note that $\beta_k$ is the finite analog of $\zeta(k)$.
(b). When $k$ is even the first equation Conjecture~\ref{conj:unitT} holds by \cite[Prop. 7]{Zhao2023Dec}.
(c). Because of the extra factor of $2^d$ in the definition \eqref{defn:MTV}, Conjecture \ref{conj:unitT} has a slightly different form
from \cite[Conj. 5]{Zhao2023Dec}. Also, $\mod \zeta(2)$ was dropped there mistakenly.
\end{rem}

We now show that the second relation in Conjecture~\ref{conj:unitT} holds. The first relation will be proved in Theorem~\ref{thm:unitTA}.

\begin{thm}\label{thm:unitSMT}
For all $k\in\N$, we have
\begin{equation*}
  T_\sha^\Sy(\{1\}^k)=
\left\{
  \begin{array}{ll}
    0, & \hbox{if $k$ is even,}\\
    2\log 2, \phantom{ \displaystyle \frac12} & \hbox{if $k=1$;} \\
    2(1-2^{1-k}) \zeta(k), & \hbox{if $k\ge 2$ and $k$ is odd.}
  \end{array}
\right.
\end{equation*}
In particular,
\begin{equation} \label{equ:T111=dual}
  T_\sha^\Sy(\{1\}^k)\equiv T_\sha^\Sy(k)  \pmod{\zeta(2)\ES_{k-2}^{(1)}}.
\end{equation}
\end{thm}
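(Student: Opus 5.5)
We will compute all the values $T_\sha^\Sy(\{1\}^k)$ at once with generating functions, using the iterated integral expressions for $T$ and $S$ at unit indices. By \eqref{defn:MMV} and the integral formula for $M$, both unit values are iterated integrals of explicit 1-forms. For $T$ the parity pattern gives $T(\{1\}^i)=\int_0^1\om_{-1}^i$. For $S$ we have $n_i$ even, so $\eps_i=1$ and the signs alternate; hence $S(\{1\}^i)=\int_0^1\om_{-1}^{i-1}\om_1$, suitably regularized. Put $\Lambda(t)=\int_0^t\om_{-1}=\log\frac{1+t}{1-t}$. Shuffle regularization replaces $-\log(1-t)$ at $t=1$ by $0$, so $T_\sha(1)=\log 2$, and in general $T_\sha(\{1\}^i)=(\log 2)^i/i!$. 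Thus
\[
A(x):=\sum_{i\ge0}T_\sha(\{1\}^i)x^i=e^{x\log 2}.
\]
Similarly we set $B(x):=\sum_{i\ge0}S_\sha(\{1\}^i)x^i$. Formally,
\[
B(x)=1+x\int_0^1 \om_1(t)\,e^{x(\Lambda(1)-\Lambda(t))},
\]
where $e^{x\Lambda(1)}$ is interpreted through the regularization, i.e.\ as $2^x$ times the divergent factor set to $1$.

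Next we split the defining sum of $T_\sha^\Sy$ by the parity of the depth $k$. In $\sum_{i}(-1)^i F_\sha(\{1\}^i)T_\sha(\{1\}^{k-i})$ we have $F=T$ for $k$ even and $F=S$ for $k$ odd. Consequently, $\sum_k T_\sha^\Sy(\{1\}^k)x^k$ equals the even part of $A(-x)A(x)$ plus the odd part of $B(-x)A(x)$. Since $A(-x)A(x)=1$, every even $k\ge2$ gives $0$, which is the first case of the statement.

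For odd $k$ we need the odd part of
\[
B(-x)A(x)=A(x)-x\int_0^1 \frac{2t\,dt}{1-t^2}\Big(\frac{1+t}{1-t}\Big)^{x}.
\]
Here the regularized factors cancel, because the $e^{-x\Lambda(1)}$ from $B(-x)$ meets the $e^{x\log2}$ from $A(x)$. What remains is a convergent integral for $x<0$ near $0$, which we continue analytically. We substitute $u=\frac{1-t}{1+t}$, under which $\om_{-1}$ becomes $-du/u$, as in Theorem~\ref{thm:dualMMVo}, and $\frac{2t\,dt}{1-t^2}=\big(\frac{1}{u}-\frac{2}{1+u}\big)\cdot(-du)$ up to sign conventions. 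This turns the integral into pieces of the form $\int_0^1 u^{-x-1}\,du$ and $\int_0^1\frac{u^{-x}}{1+u}du$. The first piece is a pole $-1/x$, which produces the regularization-dependent constant. The second is $\frac12\big(\psi(\frac{1-x}{2})-\psi(\frac{-x}{2}+1)\big)$-type digamma data, which expands as $\sum_{n\ge0}(-1)^n(1-2^{-n})\zeta(n+1)x^n$ plus a $\log2$ term.

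Extracting the odd part should give coefficient $2(1-2^{1-k})\zeta(k)$ for odd $k\ge3$ and $2\log2$ for $k=1$; the latter we can double-check directly as $T_\sha(1)-S_\sha(1)=\log 2-(-\log 2)$. The final congruence \eqref{equ:T111=dual} then follows by comparing with the depth-one value $T_\sha^\Sy(k)=\frac{2-2^{2-k}}{2-2^{1-k}}T(k)$, where $T(k)=2(1-2^{-k})\zeta(k)$. For odd $k$ the two are rational multiples of $\zeta(k)$ that agree, and for even $k$ both values lie in $\zeta(2)\ES^{(1)}_{k-2}$.

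The main obstacle is making the regularization rigorous: we must verify that the shuffle-regularized $S_\sha(\{1\}^i)$ really equals the $x^i$-coefficient of the formal expression for $B(x)$ with $\log(1-t)\to0$. For this we plan to write $\om_{-1}=\frac{dt}{1-t}+\frac{dt}{1+t}$ and $\om_1=\frac{dt}{1-t}-\frac{dt}{1+t}$ and work with the polynomial in $T=-\log(1-t)|_{t\to1}$ (equivalently $\log\varepsilon$) at truncation $t=1-\varepsilon$. The cancellation of the $T$-dependence in $B(-x)A(x)$ should then be checked coefficientwise.
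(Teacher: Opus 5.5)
\paragraph{Verdict.} Your overall plan is a workable alternative to the paper's, and the even case is fine, since $A(-x)A(x)=1$ holds for any multiplicative regularization. The odd case, however, breaks at the step where you regularize $e^{-x\Lambda(1)}$ to $2^{-x}$ on its own and then analytically continue the remaining integral from $x<0$. This is not the shuffle regularization: the divergence of $e^{-x\Lambda(1)}$ and the divergence of $\int_0^1\om_1e^{x\Lambda(t)}$ near $t=1$ are coupled, so they cannot be handled separately.

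\paragraph{The failing step.} Carry out exactly what you describe, with $u=\frac{1-t}{1+t}$ and $I(x):=\int_0^1\frac{u^{-x}}{1+u}\,du$. Your formula then gives
\[
B(-x)A(x)=2^x-x\int_0^1\Big(\frac1u-\frac{2}{1+u}\Big)u^{-x}\,du=2^x+1+2xI(x).
\]
Its odd part contains a spurious $\sinh(x\log 2)$. At $k=1$ this produces $3\log 2$, which contradicts your own check $T_\sha(1)-S_\sha(1)=2\log 2$. For odd $k\ge 3$ it adds an extra $(\log 2)^k/k!$. The pole term is therefore not a ``regularization-dependent constant'' that can be dropped: it must cancel $A(x)$ itself. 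Your closing sentence (``Extracting the odd part should give\dots'') asserts the expected answer rather than deriving it.

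\paragraph{The repair.} Do the truncation you defer to the last paragraph; it is the heart of the argument, not a formality. Integrate over $[0,1-\varepsilon]$, where the shuffle product holds exactly, and set $u_\varepsilon=\varepsilon/(2-\varepsilon)$. Then $A_\varepsilon(x)=e^{x\Lambda(1-\varepsilon)}=u_\varepsilon^{-x}$. The pole piece becomes $-x\,u_\varepsilon^{x}\int_{u_\varepsilon}^1u^{-x-1}du=u_\varepsilon^{x}-1$, and this absorbs the leading $1$ of $B_\varepsilon(-x)$:
\[
B_\varepsilon(-x)=u_\varepsilon^{x}\Big(1+2x\int_{u_\varepsilon}^1\frac{u^{-x}}{1+u}\,du\Big),\qquad
B_\varepsilon(-x)A_\varepsilon(x)=1+2x\int_{u_\varepsilon}^1\frac{u^{-x}}{1+u}\,du .
\]
All $\log\varepsilon$-dependence cancels identically. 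Each coefficient of the product is therefore a convergent integral, and its regularized value is its limit as $\varepsilon\to0$, namely $1+2\sum_{k\ge1}\eta(k)x^k$. Here $\eta(1)=\log 2$ and $\eta(k)=(1-2^{1-k})\zeta(k)$, because $\int_0^1u^{-x}(1+u)^{-1}du=\sum_{n\ge0}\eta(n+1)x^n$. Note there is no alternating sign in this expansion, though that is harmless for odd $k$. This gives the stated values for all odd $k$.

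\paragraph{Comparison with the paper.} With this correction your generating-function route is valid. The paper achieves the same cancellation at the word level instead. Telescoping the shuffle sum leaves $\om_{-1}^k-\om_1\om_{-1}^{k-1}=(\om_{-1}-\om_1)\om_{-1}^{k-1}$, where $\om_{-1}-\om_1=\frac{2\,dt}{1+t}$ is convergent. Theorem~\ref{thm:dualMMVo} then turns this into $T(k)-S(k)=-2\zeta(\ol{k})$, so the paper never has to handle divergent analytic expressions.
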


\begin{proof}
When $k$ is even, by definition
\begin{align*}
    T_\sha^\Sy(\{1\}^k)=&\,  \sum_{j=0}^k  (-1)^j T_\sha(\{1\}^j)T_\sha(\{1\}^{k-j})\\
=&\, \int_0^1 \sum_{j=0}^k  (-1)^j \om_{-1}^j\sha \om_{-1}^{k-j}\\
=&\, \int_0^1 2\om_{-1}^k+  \sum_{j=1}^{k-1}  (-1)^j \om_{-1}^j\sha \om_{-1}^{k-j}  \\
=&\, \int_0^1 2\om_{-1}^k+  \sum_{j=1}^{k-1}  (-1)^j \om_{-1}(\om_{-1}^{j-1}\sha \om_{-1}^{k-j})
+ \sum_{j=1}^{k-1}  (-1)^j \om_{-1}(\om_{-1}^j\sha \om_{-1}^{k-j-1}) =0
\end{align*}
by changing the index $j\to j-1$ in the first sum since $k$ is even. On the other hand, Euler's famous identity for $k=2n$ says
\begin{equation*}
\zeta(2n)=\frac{(-1)^n}{2} \frac{(24\zeta(2))^{n} B_{2n}}{(2n)!}\equiv 0   \pmod{\zeta(2)\ES_{k-2}^{(1)}}
\end{equation*}
since $T(k-2)=(2-2^{3-k}) \zeta(k-2)$. This proves the theorem when $k$ is even.

When $k=1$, by the regularization given in \cite[Prop. 2.8]{XuZhao2020a}
\begin{equation*}
T_\sha^\Sy(1)= T_\sha(1)-S_\sha(1)=2\log 2.
\end{equation*}
When $k>1$ and $k$ is odd, by definition
\begin{align*}
 T_\sha^\Sy(\{1\}^k)=&\, T_\sha(\{1\}^k)+ \sum_{j=1}^k  (-1)^j S_\sha(\{1\}^j)T_\sha(\{1\}^{k-j})\\
=&\,\int_0^1 \om_{-1}^k+ \sum_{j=1}^k  (-1)^j \om_{-1}^{j-1} \om_1 \sha \om_{-1}^{k-j}\\
=&\, \int_0^1 \om_{-1}^k-\om_1\om_{-1}^{k-1}- \om_{-1}( \om_1 \sha \om_{-1}^{k-2})-\om_{-1}^{k-1} \om_1 \\
&\, +\sum_{j=2}^{k-1}  (-1)^j \om_{-1} (\om_{-1}^{j-2}\om_1 \sha \om_{-1}^{k-j})
+ \sum_{j=2}^{k-1} (-1)^j\om_{-1} (\om_{-1}^{j-1}\om_1 \sha \om_{-1}^{k-j-1})  \\
=&\, \int_0^1  \om_{-1}^k-\om_1\om_{-1}^{k-1}
\end{align*}
by changing the index $j\to j+1$ in the first sum since $k$ is odd. Applying the duality relation contained in Theorem~\ref{thm:dualMMVo}, we see that
\begin{equation*}
  T_\sha^\Sy(\{1\}^k)=\int_0^1  \om_0^{k-1}\om_{-1}-\om_0^{k-1}\om_1
  = T(k)-S(k)=-2 \zeta(\ol{k})=2(1-2^{1-k})\zeta(k).
\end{equation*}
Finally, for all $k\ge 2$ we have
\begin{align} \label{equ:SMTdp1}
    T_\sha^\Sy(k) = T(k)+(-1)^k S(k)=&\, \zeta(k)-\zeta(\ol{k})+(-1)^k\big(\zeta(k)+\zeta(\ol{k}) \big) \\
    =&\, \left\{
  \begin{array}{ll}
    2\zeta(k), \hskip1.5cm\ & \hbox{if $k$ is even;}\\
    -2 \zeta(\ol{k}),  & \hbox{if $k\ge 2$ and $k$ is odd}
  \end{array}
\right.         \notag   \\
    \equiv &\, \left\{
  \begin{array}{ll}
      0, \phantom{\zeta(k)} \hskip1.5cm\ & \hbox{if $k$ is even;}\\
     2(1-2^{1-k}) \zeta(k), & \hbox{if $k\ge 2$ and $k$ is odd,}
  \end{array}
\right. \notag
\end{align}
modulo $\zeta(2)\ES_{k-2}^{(1)}$. This completes the proof of the theorem.
\end{proof}

The following combinatorial lemma will be used to prove Theorem~\ref{thm:unitTA}.

\begin{lem}\label{lem:comb}
For all positive integers $n$ and primes $p>n$, we have
\begin{equation*}
\sum_{i=n}^{p-1}(-1)^{i-n}\binom{i-1}{n-1}\equiv 2^{1-n}-1\pmod{p}.
\end{equation*}
\end{lem}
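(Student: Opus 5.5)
Our plan is to prove the congruence with generating functions: exchange the alternating sum with a coefficient extraction, and then evaluate the resulting finite geometric sum modulo $p$. Write $\binom{i-1}{n-1}$ as the coefficient of $x^{n-1}$ in $(1+x)^{i-1}$. Then
\begin{equation*}
\sum_{i=n}^{p-1}(-1)^{i-n}\binom{i-1}{n-1}=(-1)^{n}\,[x^{n-1}]\sum_{i=1}^{p-1}(-1)^{i}(1+x)^{i-1}.
\end{equation*}
Extending the sum to start at $i=1$ is harmless, because the terms with $i<n$ contribute nothing to the coefficient of $x^{n-1}$.

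The inner sum is geometric with ratio $-(1+x)$:
\begin{equation*}
\sum_{i=1}^{p-1}(-1)^i(1+x)^{i-1}=-\frac{1-(-(1+x))^{p-1}}{2+x}=\frac{(1+x)^{p-1}-1}{2+x},
\end{equation*}
using that $p-1$ is even. This is a genuine polynomial identity over $\Z$: the quotient is a polynomial because $x=-2$ is a root of $(1+x)^{p-1}-1$. We then reduce modulo $p$. Since $(1+x)^{p}\equiv 1+x^{p}$, only low-degree coefficients matter, and $n-1<p-1$, the coefficients of $(1+x)^{p-1}$ in degrees at most $p-1$ satisfy $\binom{p-1}{j}\equiv(-1)^j\pmod p$. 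Hence
\begin{equation*}
(1+x)^{p-1}\equiv \sum_{j=0}^{p-1}(-x)^j \pmod p .
\end{equation*}

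The remaining work is to extract $[x^{n-1}]$ of $\big(\sum_{j=1}^{p-1}(-x)^j\big)/(2+x)$ modulo $p$. Expanding $1/(2+x)=\tfrac12\sum_{m\ge0}(-x/2)^m$ is legitimate in $\Z_{(p)}[[x]]$, and truncation at degree $n-1<p-1$ is compatible with the polynomial identity. This gives
\begin{equation*}
[x^{n-1}]=\frac12\sum_{j=1}^{n-1}(-1)^{j}(-1)^{n-1-j}2^{-(n-1-j)}=\frac{(-1)^{n-1}}{2}\sum_{m=0}^{n-2}2^{-m}=(-1)^{n-1}\big(1-2^{1-n}\big).
\end{equation*}
Multiplying by $(-1)^n$ yields $2^{1-n}-1$, as claimed.

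For $n=1$ the direct check is immediate: $\sum_{i=1}^{p-1}(-1)^{i-1}=0=2^0-1$.

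The main obstacle is the bookkeeping in the reduction step. One must confirm that reducing $(1+x)^{p-1}$ modulo $p$ and dividing by $2+x$ commute for low-degree coefficients. To handle this, we will work throughout in $\Z_{(p)}[x]$ truncated at degree $p-2$, where $2+x$ is invertible. The key point is that $n-1\le p-2$ keeps every relevant coefficient in the range where $\binom{p-1}{j}\equiv(-1)^j$ holds.
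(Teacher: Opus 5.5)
Your proof is correct and is essentially the paper's argument. Your polynomial $\sum_{i=1}^{p-1}(-1)^i(1+x)^{i-1}$ is exactly $G(-x)/x$ for the paper's generating function $G(x)=\sum_{n}\big(\sum_{i=n}^{p-1}(-1)^{i-n}\binom{i-1}{n-1}\big)x^n$. Both proofs then sum the same geometric series, use $\binom{p-1}{j}\equiv(-1)^j \pmod p$, and expand $1/(2\pm x)$ to read off coefficients; your justification that reducing modulo $p$ commutes with dividing by $2+x$ is a bit more careful than the paper's.

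Like the paper, you tacitly need $p$ odd (so that $p-1$ is even and $2$ is invertible). This is harmless for the application, where $p\ge k+3$. It is also genuinely necessary: for $p=2$, $n=1$ the left side equals $1$ and the right side equals $0$.
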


\begin{proof}
Consider the generating function
\begin{align*}
G(x):=\sum_{n=1}^{p-1}\left(\sum_{i=n}^{p-1}(-1)^{i-n}\binom{i-1}{n-{1}}\right)x^{n}
&=\sum_{i=1}^{p-1}\left(\sum_{n=1}^{i}(-1)^{i-n}\binom{i-1}{n-{1}}\right)x^{n}=-\sum_{i=1}^{p-1}x(x-1)^{i-1}.
\end{align*}
By the well-known congruence $\binom{p-1}{k}\equiv (-1)^{k}\pmod{p}$, we obtain
\begin{align*}
G(x)=x\frac{(x-1)^{p-1}-1}{2-x}=&\, \frac{x}{2-x}\sum_{i=1}^{p-1}\binom{p-1}{i}(-1)^{p-i+1}x^{i}\\
 \equiv&\,  -\frac{x}{2-x}\sum_{i=1}^{p-1}x^{i}\equiv-\sum_{j=1}^{\infty}\left(\frac x2\right)^j\sum_{i=1}^{p-1}x^{i} \pmod{p}.
\end{align*}
By Fermat's Little Theorem,
\begin{align*}
G(x) \equiv\sum_{n=2}^{p-1}\left(-\sum_{j=1}^{n-1}2^{-j}\right)x^{n}
+\sum_{n=p}^{\infty}\left(-\sum_{j=1}^{p-1}2^{j}\right)\left(\frac x2\right)^{n}
\equiv\sum_{n=2}^{p-1}(2^{1-n}-1)x^{n} \pmod{p}.
\end{align*}
Comparing the coefficients yields the lemma immediately.
\end{proof}

\begin{thm}\label{thm:unitTA}
Let $k$ be a positive integer. Then
\begin{equation*}
T_\calA(\{1\}^k)=-\zeta_\calA(\ol{k})
=
\left\{
  \begin{array}{ll}
    0, & \hbox{if $k$ is even;} \\
    2\texttt{q}_2, & \hbox{if $k=1$;} \\
    2(1-2^{1-k})\beta_k, & \hbox{if $k$ is odd and $k\ge 3$.}
  \end{array}
\right.
\end{equation*}
\end{thm}

\begin{proof}
If $k$ is even then the theorem follows from \cite[Prop.\ 7]{Zhao2023Dec}. If $k=1$ then
\begin{equation*}
T_\calA(1)=\zeta_\calA(1)-\zeta_\calA(\ol{1})=2\texttt{q}_2
\end{equation*}
by \cite[(8.14)]{Zhao2016}.

For odd $k\ge 3$, we will employ the strategy of expressing a series in two ways, and comparing coefficients. Applying the MZV integral representation, it is easy to see that
\begin{equation}\label{equ:multilog}
\sum_{n_1>\cdots >n_k>0}\frac{x^{n_1}}{n_1\cdots n_k}
=\int_0^{x}\left(\frac{dt}{1-t}\right)^k=\frac{(-1)^k}{k!}\log^{k}\left(1-x\right)
\end{equation}
and
\begin{equation*}
\sum_{\substack{n_1>\cdots >n_k>0\\n_j\equiv k-j+1\pmod 2}}\frac{2^kx^{n_1}}{n_1\cdots n_k}=\int_0^{x}\om_{-1}^k=\frac{(-1)^k}{k!}\log^{k}\left(\frac{1-x}{1+x}\right).
\end{equation*}
According to \eqref{equ:multilog} we can write
\begin{align*}
\sum_{\substack{n_1>\cdots >n_k\\n_j\equiv k-j+1\pmod 2}}\frac{2^kx^{n_1}}{n_1\cdots n_k}
&=\frac{(-1)^{k}}{k!}\log^{k}\left(1-\frac{2x}{1+x}\right)\\
&=\sum_{n_1>\cdots >n_k>0}\frac{1}{n_1\cdots n_{k}}\frac{2^{n_1}x^{n_1}}{(x+1)^{n_1}}\\
&=\sum_{n_1>\cdots n_k}\frac{2^{n_1}}{n_1\cdots n_k}\sum_{m=0}^{\infty}\binom{-n_1}{m}x^{n_1+m} \\
&=\sum_{i=k}^{\infty}\left(\sum_{i\ge n_1>\cdots>n_k>0}\frac{1}{n_1\cdots n_k}2^{n_1}\binom{-n_1}{i-n_1}\right)x^i
\end{align*}
by the index substitution $i=n_1+m$. For any fixed prime $p\ge k+3$, summing the coefficients of $x^i$ for $1\le i\le p-1$ yields
\begin{equation*}
T_\calA(\{1\}^k)_p=\sum_{i=1}^{p-1}\sum_{i\ge n_1>\cdots>n_k>0}\frac{2^{n_1}}{n_1\cdots n_k}\binom{-n_1}{i-n_1}
=\sum_{p-1\ge n_1>\cdots n_k>0}\frac{2^{n_1}}{n_1\cdots n_k}\sum_{i=n_1}^{p-1}(-1)^{i-n_1}\binom{i-1}{n_1-1}.
\end{equation*}
We can now apply Lemma~\ref{lem:comb} to get
\begin{equation*}
T_\calA(\{1\}^k)_p=\sum_{p>n_1>\cdots >n_{k}>0}\frac{2-2^{n_1}}{n_1\cdots n_k}
\equiv \sum_{p>n_1>\cdots >n_{k}>0}\frac{-2^{n_1}}{n_1\cdots n_k} \pmod{p}
\end{equation*}
using the fact that $\zeta_\calA\left(\{1\}^k\right)_p\equiv0 \pmod{p}$ for $p\ge k+3$ (see \cite[Lemma 8.5.3]{Zhao2016}). Finally, taking $t=2$ in \cite[(47)]{SakugawaSeki} implies that
\begin{equation*}
T_\calA(\{1\}^k)_p\equiv \sum_{p>n_1>\cdots >n_{k}>0}\frac{-2^{n_1}}{n_1\cdots n_k}\equiv -\zeta_\calA(\ol{k})_p \pmod{p},
\end{equation*}
hence completing the proof of the theorem by \cite[(8.15)]{Zhao2016}. .
\end{proof}

\begin{cor}\label{cor:unitT-oddWt}
Let $k$ be a positive integer. Then
\begin{equation*}
T_\calA(\{1\}^k)=T_\calA(k)=-\zeta_\calA(\ol{k})=
\left\{
  \begin{array}{ll}
    0, & \hbox{if $k$ is even;} \\
    2\texttt{q}_2, & \hbox{if $k=1$;} \\
    2(1-2^{1-k})\beta_k, & \hbox{if $k$ is odd and $k\ge 3$.}
  \end{array}
\right.
\end{equation*}
\end{cor}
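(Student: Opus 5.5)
Given Theorem~\ref{thm:unitTA}, the only new assertion in the corollary is $T_\calA(k)=-\zeta_\calA(\ol{k})$ for every $k\in\N$; the remaining equalities then follow by transitivity. So the plan is to compute $T_\calA(k)$ directly from the depth-one definition and compare it with $\zeta_\calA(\ol k)$ and $\zeta_\calA(k)$.

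\textbf{Step 1: split into Euler sums.} For depth one, definition \eqref{defn:MTV} truncated at a prime $p$ gives
\begin{equation*}
T_\calA(k)_p=\sum_{\substack{0<n<p\\ n \text{ odd}}}\frac{2}{n^k}
=\sum_{0<n<p}\frac{1-(-1)^n}{n^k}
=\zeta_\calA(k)_p-\zeta_\calA(\ol{k})_p .
\end{equation*}
This is the same manipulation as in the case $k=1$ of Theorem~\ref{thm:unitTA}.

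\textbf{Step 2: $\zeta_\calA(k)=0$.} For every prime $p>k+1$, the power sum $\sum_{n=1}^{p-1}n^{-k}\equiv\sum_{n=1}^{p-1}n^{p-1-k}\equiv 0 \pmod p$, because $p-1\nmid p-1-k$. Hence $\zeta_\calA(k)=0$ in $\calA$, and therefore $T_\calA(k)=-\zeta_\calA(\ol k)$. This covers all $k$, including $k=1$, since $\zeta_\calA(1)=0$ as well.

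\textbf{Step 3: the explicit values.} The case analysis is inherited from Theorem~\ref{thm:unitTA}, which cites \cite[(8.14),(8.15)]{Zhao2016}.
\begin{itemize}
\item For even $k$, the map $n\mapsto p-n$ reverses the parity of $n$ and leaves $n^{-k}$ unchanged mod $p$. So the odd-$n$ sum equals the even-$n$ sum, and each is half of $\zeta_\calA(k)=0$; hence $T_\calA(k)=0$.
\item For $k=1$ the value is $2\texttt{q}_2$.
\item For odd $k\ge3$ the value is $2(1-2^{1-k})\beta_k$.
\end{itemize}
Combined with Theorem~\ref{thm:unitTA}, this yields the chain $T_\calA(\{1\}^k)=T_\calA(k)=-\zeta_\calA(\ol k)$ with the stated values.

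\textbf{Expected difficulty.} There is no real obstacle. The only point needing care is that the congruences hold only for sufficiently large primes ($p>k+1$). This is harmless, because $\calA$ quotients by $\sum_p\Z/p\Z$, so finitely many primes may be discarded.
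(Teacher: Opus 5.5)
Your proof is correct and has the same structure as the paper's: Theorem~\ref{thm:unitTA} supplies $T_\calA(\{1\}^k)=-\zeta_\calA(\ol{k})$ together with the explicit values, and only the depth-one link to $T_\calA(k)$ is missing. The paper takes that link from \cite[(12)]{Zhao2023Dec}, adjusting for the $2^d$ normalization. You instead derive $T_\calA(k)=\zeta_\calA(k)-\zeta_\calA(\ol{k})=-\zeta_\calA(\ol{k})$ directly from the definition and the vanishing of power sums mod $p$, which makes the argument self-contained.
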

\begin{proof}
This follows immediately from the theorem and \cite[(12)]{Zhao2023Dec}. Note the extra factor of $2^d$ in the definition \eqref{defn:MTV}.
\end{proof}

\begin{rem}\label{rem:unitT-oddWt}
In fact, one can show that for all primes $p$ and positive integers $k<p$ the corresponding $p$-components
\begin{equation}\label{equ:p_compUnitT-oddWt}
T_\calA(\{1\}^k)_p=T_\calA(k)_p.
\end{equation}
The process is a little tedious as one has to go through every step in the proof of Theorem~\ref{thm:unitTA}.
The main idea is that if $k<p-1$ then both (i) $T_\calA\left(\{1\}^k\right)_p=0$ if $2|k$ and (ii) $\zeta_\calA\left(\{1\}^k\right)_p=0$ can be proved using the linear shuffle relation proof as presented in \cite[(12)]{Zhao2023Dec}. The key is  to extract the non-zero $x^p$ term from the product of two power series terms. This is possible as the sum of the smallest powers is at most $k+2\le p$ when $k<p-1$. But this fails if $k=p-1$ which forces us to compute directly and get
\begin{align*}
T_\calA(\{1\}^{p-1})_p=&\, \frac{2^{p-1}}{(p-1)!}\equiv -1 \pmod{p},\\
T_\calA(p-1)_p=&\,\sum_{0< j<p,   j \text{ odd}} \frac{2}{j^{p-1}}\equiv 2\Big(\frac{p-1}{2}\Big)\equiv -1 \pmod{p},
\end{align*}
by Fermat's Little Theorem and Wilson's Theorem. Hence, \eqref{equ:p_compUnitT-oddWt} holds for all $k\le p$.
\end{rem}

\section{Some symmetric multiple $T$-values of height 1}
Recall that the height of $\bfs\in\N^d$ is the number of components of $\bfs$ which are at least 2. The following is a reformulation of \cite[Conj. 2 ]{Zhao2023Dec} concerning indices $\bfs$ of height 1 with leading component equal to 2.

\begin{conj}\label{conj:T21k}
For all $k\in\N$, we have
\begin{equation*}
  T_\calA(2,\{1\}^k)=(-1)^{k}  T_\calA(1,k+1), \qquad
  T_\sha^\Sy(2,\{1\}^k)\equiv (-1)^{k} T_\sha^\Sy(1,k+1) \pmod {\zeta(2)\ES_{k}}.
\end{equation*}
\end{conj}

\begin{rem}
There was an obvious typo on the left-hand side of the second equation in \cite[Conj. 2 ]{Zhao2023Dec}.
\end{rem}

We now provide a stronger and more precise result than the second equation in Conjecture \ref{conj:T21k}. We will prove the first relation in Corollary~\ref{cor:TA21_k}.

\begin{thm}\label{thm:SMT21k}
For all $k\in\N$, we have
\begin{equation*}
  T_\sha^\Sy(2,\{1\}^k)-(-1)^{k} T_\sha^\Sy(1,k+1)=
  \left\{
    \begin{array}{ll}
      \displaystyle 2\sum_{n=1}^{\frac{k-1}{2}}\zeta(2n)T(k+2-2n), & \hbox{if $k$ is odd;} \\
      (k+2)T(k+2), \phantom{\frac12} & \hbox{if $k$ is even.}
    \end{array}
  \right.
\end{equation*}
In particular,
\begin{equation*}
  T_\sha^\Sy(2,\{1\}^k)\equiv (-1)^{k} T_\sha^\Sy(1,k+1) \pmod {\zeta(2)\ES_{k}}.
\end{equation*}
\end{thm}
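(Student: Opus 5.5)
The plan is to compute both symmetric values directly as shuffle-regularized iterated integrals, in the same way as in the proof of Theorem~\ref{thm:unitSMT}. The depth of $(2,\{1\}^k)$ is $k+1$ and the depth of $(1,k+1)$ is $2$, so the parity cases differ, and I split according to the parity of $k$.

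\textbf{Expanding the two symmetric values.} For $T_\sha^\Sy(1,k+1)$, which has even depth, the definition gives three terms:
\begin{equation*}
T_\sha^\Sy(1,k+1)=T_\sha(1,k+1)-T_\sha(1)T(k+1)+(-1)^{k}T_\sha(k+1,1).
\end{equation*}
Here $T_\sha(1)$ denotes the regularized value. For $(2,\{1\}^k)$ I write every term as a word in $\om_0,\om_{\pm1}$ using the integral representation of multiple mixed values. The $T$-factors are words of the form $\om_0\om_{-1}^{\,\cdot}$ or $\om_{-1}^{\,\cdot}$. The $S$-factors, for reversed indices $(\{1\}^{j-1},2)$, are words of the form $\om_{-1}^{j-1}\om_0\om_1$ (or $\om_{-1}^{j-1}\om_1$ when $j$ is small), with the leading forms adjusted according to whether the depth $k+1$ is even (use $T\cdot T$) or odd (use $S\cdot T$).

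\textbf{Telescoping.} The alternating sign is $(-1)^{j+1}$ for $j\ge1$. I would apply the recursive shuffle identity $a u\sha b v=a(u\sha bv)+b(au\sha v)$ and pair consecutive terms, exactly as in Theorem~\ref{thm:unitSMT}. The interior sums should then cancel by the shift $j\to j\pm1$, leaving only a few boundary words. I expect the result to be something like
\begin{equation*}
\om_0\om_{-1}^{k+1}\pm \om_{-1}^{k}\om_0\om_1\pm(\text{terms with } \om_1 \text{ leading}).
\end{equation*}

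\textbf{Dualizing.} Next I apply Theorem~\ref{thm:dualMMVo} to these boundary words. The dual of $\om_0\om_{-1}^{k+1}$ is $\om_0^{k+1}\om_{-1}$-type, which corresponds to the index $(k+1,1)$ or $(1,k+1)$. Words containing $\om_1$ dualize via $u_1=\om_0+\om_1-\om_{-1}$, and this produces $S$-values and $T(k+2)$. Comparing with the expansion of $T_\sha^\Sy(1,k+1)$, the depth-two pieces should match with sign $(-1)^k$.

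\textbf{The leftover terms (main obstacle).} What remains are depth-one and regularization pieces: $T_\sha(1)T(k+1)$ against $S_\sha(1)T(k+1)$, and products of double values. The hardest step is evaluating these leftovers. For even $k$ (odd weight $k+2$) I would use the parity/sum-formula results for double $T$- and $S$-values (Kaneko–Tsumura, and Xu–Zhao's double mixed values, which come from double Euler sums) to reduce $T(1,k+1)+T(k+1,1)$-type combinations to a multiple of $T(k+2)$, aiming for $(k+2)T(k+2)$. For odd $k$ (even weight) I would use the stuffle identity $T(a)T(b)$ together with Euler-type decompositions of the double values. The aim there is the sum $2\sum_{n=1}^{(k-1)/2}\zeta(2n)T(k+2-2n)$, in which every term visibly lies in $\zeta(2)\ES_k$ because $\zeta(2n)\in\zeta(2)\Q[\zeta(2)]$.

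If the required double-value evaluations are not available in closed form, I would fall back on generating functions in two variables for $\sum T(a,b)x^{a-1}y^{b-1}$, obtained from the integral representation, and read off the needed coefficients. The "in particular" statement then follows because both right-hand sides lie in $\zeta(2)\ES_k$. For even $k$, $T(k+2)$ is a rational multiple of $\zeta(k+2)$, and the claim for that case needs it to be $\equiv0$ modulo $\zeta(2)\ES_k$; I note that this does \emph{not} hold as stated for odd weight, so I would double-check this step when carrying out the computation.
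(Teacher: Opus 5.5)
Your skeleton (write both symmetric values as shuffle words, telescope the alternating sum, dualize with Theorem~\ref{thm:dualMMVo}, then finish with sum formulas for double sums) is the paper's. However, the case analysis rests on a parity slip, and the decisive evaluation is left undone.

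\textbf{The parity slip.} The weight is $k+2\equiv k \pmod 2$, so even $k$ means \emph{even} weight and odd $k$ means \emph{odd} weight, not the reverse. This has two consequences.

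First, for even $k$ the parity theorem (reduction of odd-weight double values to depth one) is not available, so that half of your plan cannot work. What does work, and what the paper does, is the following. Subtract $T_\sha^\Sy(1,k+1)=T_\sha(1,k+1)-T_\sha(1)T(k+1)+T(k+1,1)$ and write $\om_1=b+c$, $\om_{-1}=b-c$ with $b=dt/(1-t)$, $c=dt/(-1-t)$. Then every leftover is a \emph{full} sum over splittings, such as $\sum_{j}\zeta(\ol{j+1},\ol{k-j+1})$, $\sum_j\zeta(\ol{j+1},k-j+1)$, $\sum_j\zeta(j+1,\ol{k-j+1})$ and $\sum_j\zeta(j+1,k-j+1)$, plus boundary terms. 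The four sum formulas for double Euler sums of Berger et al.\ (Thm.~4.2) collapse all of this to exactly $(k+2)T(k+2)$.

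Second, your closing worry is unfounded. For even $k$, $T(k+2)=(2-2^{-1-k})\zeta(k+2)$ is a rational multiple of $\zeta(2)^{(k+2)/2}$ by Euler, hence lies in $\zeta(2)\ES_k$. The congruence is therefore immediate in that case.

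\textbf{The telescoped words.} Several of your predictions are off, and this hides where the work actually lies.
\begin{itemize}
\item The dual of $\om_0\om_{-1}^{k+1}$ is $\om_0^{k+1}\om_{-1}$, i.e.\ $T(k+2)$, which has depth one, not depth two.
\item For even $k$, the $S\cdot T$ sum telescopes to $\om_0(\om_1\sha\om_{-1}^k)$; there is no word with $\om_1$ leading. This dualizes to $[\om_0^k\sha(\om_0+\om_1-\om_{-1})]\om_{-1}$.
\item For odd $k$, the $T\cdot T$ sum telescopes completely: $\sum_i(-1)^i\om_{-1}^i\om_0\om_{-1}\sha\om_{-1}^{k-i}=(k+1)\om_0\om_{-1}^{k+1}$. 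Hence $T_\sha^\Sy(2,\{1\}^k)=(k+2)T(k+2)$ exactly, and there are no depth-two pieces to match against $T_\sha^\Sy(1,k+1)$.
\end{itemize}

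\textbf{What remains for odd $k$.} The whole content is two steps, with $w=k+2$.
\begin{itemize}
\item Evaluate $T_\sha^\Sy(1,k+1)=-2T(k+1,1)-\sum_{i=1}^kT(1+i,k+1-i)$. This needs the weighted sum formula of Berger et al.\ (Cor.~4.3).
\item Show that the result equals $2\sum_n\zeta(2n)T(w-2n)-(k+2)T(k+2)$. This needs the stuffle product, the classical sum formula $\sum_{a,b\ge 2,\,a+b=w}\zeta(a,b)=\zeta(w)-\zeta(w-1,1)$, and the parity-restricted sums $\sum_{a+b=w,\,a,b\ge2,\,a \text{ odd}}[\zeta(\bar a,b)+\zeta(b,\bar a)]$. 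The latter come from specializing the generating-function identity of Berger et al.\ (Prop.~4.1) at $(x,y)=(1,0),(0,1),(-1,1)$.
\end{itemize}
Your fallback to two-variable generating functions is the right kind of tool. As written, though, the proposal neither states nor derives these identities, so neither exact right-hand side is established.
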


\begin{proof}
We proceed by considering two different cases according to the parity of $k$.

\medskip
\noindent
\textbf{Case (i). $k$ even.} By definition
\begin{equation*}
T_\sha^\Sy(2,\{1\}^k)=T(2,\{1\}^k)+\sum_{j=0}^k (-1)^j S_\sha(\{1\}^j,2)T_\sha(\{1\}^{k-j}).
\end{equation*}
By the duality relation \cite[Thm. 3.1]{KanekoTs2020} we see that $T(2,\{1\}^k)=T(k+2)$. Hence,
\begin{align}
T_\sha^\Sy&\, (2,\{1\}^k)-T(k+2)=\sum_{j=0}^k (-1)^j  \om_{-1}^j \om_0 \om_1 \sha \om_{-1}^{k-j}    \notag \\
=&\,  \om_0( \om_1 \sha \om_{-1}^{k})+\om_{-1}(\om_0 \om_1 \sha \om_{-1}^{k-1})    \notag\\
&  +\sum_{j=1}^{k-1} (-1)^j  \big[\om_{-1}(\om_{-1}^j \om_0 \om_1 \sha \om_{-1}^{k-j-1}) + \om_{-1}(\om_{-1}^{j-1} \om_0 \om_1 \sha \om_{-1}^{k-j})\big]  +(-1)^k \om_{-1}^k \om_0 \om_1    \notag\\
=&\,  \om_0( \om_1 \sha \om_{-1}^k)+ \sum_{j=0}^{k-1}(-1)^j \om_{-1}(\om_{-1}^j \om_0 \om_1 \sha \om_{-1}^{k-j-1} )
+\sum_{j=1}^{k} (-1)^j \om_{-1}(\om_{-1}^{j-1} \om_0 \om_1 \sha \om_{-1}^{k-j})   \label{equ:cancelSigma}\\
=&\,  \om_0( \om_1 \sha \om_{-1}^k )    \notag
\end{align}
by changing the index $j\to j+1$ in the last sum of \eqref{equ:cancelSigma}. Now, the duality relation of MMVs given by
Thm. \ref{thm:dualMMVo} yields
\begin{align} \label{equ:LHSafterDual}
T_\sha^\Sy (2,\{1\}^k)-T(k+2)=&\, \big[\om_0^k \sha (\om_0+\om_1-\om_{-1})\big] \om_{-1}.
\end{align}
On the other hand,
\begin{equation}\label{equ:RHS}
    T_\sha^\Sy(1,k+1)= T_\sha(1,k+1)-T_\sha(1)T(k+1)+T(k+1,1)
 =\int_0^1 \om_{-1}\om_0^{k}\om_{-1}-\om_{-1} \sha\om_0^{k}\om_{-1} + \om_0^{k}\om_{-1}\om_{-1}
\end{equation}
Combining \eqref{equ:LHSafterDual} and \eqref{equ:RHS}, we get
\begin{align}
&\, T_\sha^\Sy (2,\{1\}^k)-T(k+2)- T_\sha^\Sy(1,k+1)\notag\\
=&\,\int_0^1 (k+1) \om_0^{k+1}\om_{-1}+[(\om_1-\om_{-1}) \sha \om_0^k]\om_{-1} - \om_{-1}\om_0^{k}\om_{-1}+\om_{-1} \sha\om_0^{k}\om_{-1} -\om_0^{k}\om_{-1}\om_{-1} \notag\\
=&\,\int_0^1 (k+1) \om_0^{k+1}\om_{-1}+\om_1 \sha \om_0^k\om_{-1}- \om_{-1}\om_0^k\om_{-1} - \om_0^k\om_{-1}\om_1.\label{equ:LHSafterDual-RHS}
\end{align}
Setting
\begin{equation*}
  a=\frac{dt}{t},\quad b=\frac{dt}{1-t},\quad c=\frac{dt}{-1-t},
\end{equation*}
we see that $\om_1=b+c$ and $\om_{-1}=b-c$. Thus, \eqref{equ:LHSafterDual-RHS} becomes
\begin{align*}
&\,T_\sha^\Sy (2,\{1\}^k)- T_\sha^\Sy(1,k+1)-T(k+2) \\
=&\,\int_0^1  (k+1) \om_0^{k+1}\om_{-1}+(b+c) \sha a^k(b-c)-(b-c)a^k(b-c) - a^k(b-c) (b+c) \\
=&\,\int_0^1 (k+1) \om_0^{k+1}\om_{-1}+c \sha a^k(b-c)+a\big[b \sha a^{k-1} (b-c)\big] +c a^k(b-c) - a^k(b-c) (b+c) \\
=&\,\int_0^1 (k+1) \om_0^{k+1}\om_{-1}+\sum_{j=0}^k a^j ca^{k-j}b+a^kbc -\sum_{j=0}^k a^j ca^{k-j}c-a^kc^2\\
&+\sum_{j=1}^k a^j b a^{k-j} b+a^kb^2-\sum_{j=1}^k a^j b a^{k-j} c-a^kcb  + c a^k(b-c) - a^k(b-c) (b+c).
\end{align*}
Therefore,
\begin{align*}
& T_\sha^\Sy(2,\{1\}^k)- T_\sha^\Sy(1,k+1) \\
=&\, (k+2) T(k+2)+\sum_{j=0}^k \zeta(\ol{j+1},\ol{k-j+1}) + \zeta(k+1,\ol{1})
-\sum_{j=0}^k \zeta(\ol{j+1},k-j+1) - \zeta(\ol{k+1},1) \\
& +\sum_{j=1}^k \zeta(j+1,k-j+1) + \zeta(k+1,1)
-\sum_{j=1}^k \zeta(j+1,\ol{k-j+1}) - \zeta(\ol{k+1},\ol{1}) \\
&+\zeta( \ol{1},\ol{k+1})-\zeta(\ol{1},k+1) -\zeta(k+1,1)-\zeta(k+1,\ol{1})+\zeta(\ol{k+1},1)+\zeta(\ol{k+1},\ol{1}).
\end{align*}
The four sum formulas in \cite[Thm. 4.2]{BCJXXZhao2020c} can reduce the above to the following very simple final form
\begin{align*}
T_\sha^\Sy(2,\{1\}^k)- T_\sha^\Sy(1,k+1) =&\, (k+2) T(k+2).
\end{align*}

Since $k$ is even, we get from the definition
\begin{equation*}
T(k+2)=\zeta(k+2)-\zeta(\ol{k+2})=(2-2^{-k-1})\zeta(k+2)\in \zeta(2) \ES_{k}.
\end{equation*}
This completes the proof of Thm. \ref{thm:SMT21k} for even $k$ which also implies the even $k$ case of the second relation in Conjecture~\ref{conj:T21k}.

\medskip
\noindent
\textbf{Case (ii). $k$ odd.} By definition, the left-hand side of the second relation in Conjecture \ref{conj:T21k} is
\begin{equation*}
T_\sha^\Sy(2,\{1\}^k)=T(k+2)+\sum_{i=0}^{k}(-1)^i T_\shuffle(\{1\}^i,2)T_\shuffle(\{1\}^{k-i}).
\end{equation*}
On the word level, the sum of products in the above is given by
\begin{align*}
& \sum_{i=0}^{k}(-1)^i  \om_{-1}^i \om_0 \om_{-1} \sha \om_{-1}^{k-i} \\
=&\, \om_0  \om_{-1}  \sha \om_{-1}^{k} +  \om_{-1}^k \om_0 \om_{-1}
+ \sum_{i=1}^{k-1}(-1)^i \om_{-1} (\om_{-1}^{i-1} \om_0 \om_{-1} \sha \om_{-1}^{k-i} )
+  \sum_{i=1}^{k-1}(-1)^i \om_{-1} (\om_{-1}^{i} \om_0 \om_{-1} \sha \om_{-1}^{k-i-1} ) \\
=&\, \om_0  (\om_{-1}  \sha \om_{-1}^{k})
+ \sum_{i=1}^{k}(-1)^i \om_{-1} (\om_{-1}^{i-1} \om_0 \om_{-1} \sha \om_{-1}^{k-i} )
+ \sum_{i=0}^{k-1}(-1)^i \om_{-1} (\om_{-1}^{i} \om_0 \om_{-1} \sha \om_{-1}^{k-i-1} ) \\
=&\, (k+1)\om_0 \om_{-1}^{k+1}
\end{align*}
by changing the index $i\to i-1$ in the last sum. Hence,
\begin{equation}\label{equ:TS21k}
T^{\mathcal S}(2,\{1\}^k)=(k+2) T(2,\{1\}^k) =(k+2) T(k+2)
\end{equation}
by the duality relation.

Next, by definition we can express MTVs on the right-hand side of the second relation in Conjecture \ref{conj:T21k} as
\begin{equation*}
T_\sha^\Sy(1,k+1)=T_\shuffle(1,k+1)-T_\shuffle(1)T(k+1)-T(k+1,1).
\end{equation*}
By the shuffle product,
\begin{equation*}
T_\shuffle(1)T(k+1)=\int_0^1 \om_{-1}\sha \om_0^k \om_{-1} =\int_0^1 \om_0^{k}\om_{-1}^2+\sum_{i=0}^{k}\om_0^i\om_{-1}\om_0^{k-i}\om_{-1}.
\end{equation*}
Thus, simplifying and using \cite[Cor. 4.3]{BCJXXZhao2020c} by taking the same notation $v=k+1$ there, we obtain
\begin{align}
T^{\mathcal S}(1,v)=&-2T(v,1)-\sum_{i=1}^{k}T(1+i,v-i)  \notag\\
=&-2T(v,1)-2T(k+2) - 2\zeta(\ol{v},\bar 1) - 2\zeta(\bar1,\ol{v}) + 2\zeta(\ol{v},1)+2\zeta(\ol{1},v) \notag\\
=&-2T(k+2)+2\zeta(v,\bar 1) - 2\zeta(\bar1,\ol{v})-2\zeta(v,1)+2\zeta(\ol{1},v).\label{equ:T1v}
\end{align}

Let $w=k+2$. For MZVs, we can use the stuffle relations and the sum formula to get
\begin{equation}\label{equ:noBars}
\sum_{n=1}^{\frac{k-1}{2}}\zeta(2n)\zeta(w-2n) = \sum_{a,b\ge 2}\zeta(a,b)+\frac{k-1}{2}\zeta(w)=\frac{k+1}{2}\zeta(w)-\zeta(k+1,1)
\end{equation}
by \cite[Thm. 1]{GanglKaZa2006}. For the alternating sums, we have
\begin{equation}\label{equ:twoBars1}
\sum_{n=1}^{\frac{k-1}{2}}\zeta(2n)\zeta(\overline{w-2n})=\sum_{n=1}^{\frac{k-1}{2}} \zeta(2n,\overline{w-2n})+\sum_{n=1}^{\frac{k-1}{2}}\zeta(\overline{w-2n},2n)+\frac{k-1}{2}\zeta(\overline{w}).
\end{equation}
By the middle equation in \cite[Prop. 4.1]{BCJXXZhao2020c}, we see that
\begin{align*}
&\,\sum_{a+b=w}(x+y)^{a-1}(\zeta(a,\bar b)y^{b-1}+\zeta(\bar a, \bar b)x^{b-1})\\
=&\, \sum_{a+b=w}\zeta(a,\bar b)x^{a-1}y^{b-1}+\zeta(\bar a,b)y^{a-1} x^{b-1}+\frac{x^{w-1}-y^{w-1}}{x-y}\zeta(\bar w).
\end{align*}
Substituting $(x,y)=(1,0)$ and $(x,y)=(0,1)$ yields
\begin{equation}\label{equ:sumForm1}
\sum_{\substack{a+b=w \\a,b\ge 2}}\Big[\zeta(\bar a, b)+\zeta(a,\bar b)\Big]=-\zeta(\bar 1, v)+\zeta(\bar 1,\overline{v})+\zeta(w)+\zeta(\bar w)-\zeta(\overline{v},1)-\zeta(v,\bar 1)
\end{equation}
and substituting $(x,y)=(-1,1)$ yields
\begin{equation}\label{equ:sumForm2}
\sum_{\substack{a+b=w \\ a,b\ge 2}}\Big[(-1)^a\zeta(\bar a, b)+(-1)^b\zeta(a,\bar b)\Big]=-\zeta(\bar 1, \overline{v})+\zeta(\bar 1, v)+\zeta(v,\bar 1)-\zeta(\overline{v}, 1).
\end{equation}
Taking the difference between \eqref{equ:sumForm1} and \eqref{equ:sumForm2}, we get
\begin{equation}\label{equ:twoBars}
    \sum_{\substack{a+b=w \\ a,b\ge 2\\ a\text{ odd}}} \Big[\zeta(\bar a, b)+\zeta(b,\bar a)\Big] = -\zeta(\bar 1,v)+\zeta(\bar 1,\overline{v})-\zeta(v,\bar 1)+\frac{\zeta(w)+\zeta(\bar w)}{2}.
\end{equation}
Combining this with \eqref{equ:noBars} and \eqref{equ:twoBars1}, we finally arrive at
\begin{align*}
2\sum_{n=1}^{\frac{k-1}{2}}\zeta(2n)T(w-2n)
=&\, kT(w)-2\zeta(v,1)+2\zeta(v,\bar 1)-2\zeta(\bar 1,\overline{v})+2\zeta(\bar 1,v) \\
=&T_\sha^\Sy(2,\{1\}^k)+T_\sha^\Sy(1,k+1)
\end{align*}
by \eqref{equ:TS21k} and \eqref{equ:T1v}.
This completes the proof of Theorem \ref{thm:SMT21k} when $k$ is odd.

Furthermore, since
\begin{align*}
T(m)=\zeta(m)-\zeta(\ol{m})=(2-2^{1-m})\zeta(m)
\end{align*}
we see that
\begin{equation*}
\zeta(2n)T(k+2-2n)\in \zeta(2)\ES_{k}^{(1)} \subset \zeta(2)\ES_{k}
\end{equation*}
for all $n=1,\dots,(k-1)/2.$ This shows that the second relation in Conjecture \ref{conj:T21k} holds for odd $k$.
\end{proof}

In \cite[Prop. 3]{Zhao2023Dec}, the second author proved the following result.

\begin{prop}\label{prop:FMT1ell2lk} \emph{(\cite[Prop. 3]{Zhao2023Dec})}
If $k$ is odd, then for all $\ell \le k$, we have
\begin{equation}\label{equ:FMT121}
T_\calA(\{1\}^\ell,2,\{1\}^{k-\ell})=\frac{(-1)^{\ell}}{\ell+1} \binom{k+1}{\ell}  T_\calA(2,\{1\}^k).
\end{equation}
\end{prop}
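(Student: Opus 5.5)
Since $\ell$ and $k$ are tied together only through binomial coefficients, I will prove \eqref{equ:FMT121} one prime at a time: for each prime $p>k+3$ I show the corresponding congruence of $p$-components, working with generating functions. Write $T_\calA(\{1\}^\ell,2,\{1\}^{k-\ell})_p$ as the coefficient extraction, over the exponents $1\le n\le p-1$, of the truncated power series
\begin{equation*}
F_{\ell}(x)=\sum_{\substack{n_1>\cdots>n_{k+1}>0\\ n_j\equiv k-j \ppmod{2}}}\frac{2^{k+1}x^{n_1}}{n_1\cdots n_{\ell}\,n_{\ell+1}^2\,n_{\ell+2}\cdots n_{k+1}},
\end{equation*}
whose iterated-integral form is $\int_0^x \om_{-1}^{\ell}\om_0\om_{-1}\om_{-1}^{k-\ell}$. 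Here the depth is $k+1$, which is even since $k$ is odd.

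The main tool is the reversal relation for finite sums. Substituting $n_j\mapsto p-n_j$ reverses the order of the index and multiplies by $(-1)^{\text{weight}}=(-1)^{k+2}=-1$. Because the depth is even and $p$ is odd, this substitution also carries the parity pattern of $T$ to itself. So I get $T_\calA(\bfs)=-T_\calA(\overleftarrow{\bfs})$, which relates the cases $\ell$ and $k-\ell$.

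Next I produce linear relations in $\ell$ from shuffles of finite sums. For each $0\le i\le k$ I consider the product $T_\calA(\{1\}^i)\cdot T_\calA(\{1\}^{k-i},2)$, mixing in $S_\calA$ where the parity pattern requires it. These products vanish, or reduce to products of depth-one values, modulo $p$ by Theorem~\ref{thm:unitTA} and Corollary~\ref{cor:unitT-oddWt}. Expanding each product by the stuffle (harmonic) product gives linear relations among the $T_\calA(\{1\}^\ell,2,\{1\}^{k-\ell})$ together with depth-$k$ terms in which a $1$ merges with the $2$ or two $1$'s merge. The merged terms are where parity is lost. I would handle them the way Theorem~\ref{thm:unitTA} was proved: rewrite them through the substitution $x\mapsto 2x/(1+x)$ and Lemma~\ref{lem:comb}, as finite Euler sums of height one, and then use known evaluations of those (the $\beta$ and $\texttt q_2$ formulas from \cite{Zhao2016}) to see that they combine into multiples of $T_\calA(k+2)$.

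Finally I solve the resulting triangular system together with the reversal relation. The target ratios $\frac{(-1)^\ell}{\ell+1}\binom{k+1}{\ell}$ are exactly those of the coefficients of $\om_{-1}^\ell\om_0\om_{-1}^{k+1-\ell}$ in $\om_0\om_{-1}\sha\om_{-1}^{k}$-type expansions, suggesting an induction on $\ell$ starting from $\ell=0$. The step I expect to be hardest is controlling the depth-$k$ merged terms modulo $p$ with the correct parity bookkeeping. If that becomes unmanageable, I would instead compute the $p$-component directly: use the binomial expansion as in the proof of Theorem~\ref{thm:unitTA} to express every $T_\calA(\{1\}^\ell,2,\{1\}^{k-\ell})_p$ as $\sum 2^{n}H(n)$-type sums, then compare them with the $\ell=0$ case by an identity on the binomial sums.
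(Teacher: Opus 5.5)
Your reversal step is correct, but it only pairs $\ell$ with $k-\ell$. The linear relations that are supposed to do the real work do not exist in the form you describe. \textbf{The gap is that finite MTVs have no stuffle structure.} The harmonic product of two finite sums with alternating parity chains interleaves the chains, and the resulting parity pattern is generally not alternating. This already happens in the non-merged terms, not only in the merged ones. For $k=1$, $i=1$, your product is
\[
T_\calA(1)\,T_\calA(2)=M_\calA(1,2;-1,-1)+M_\calA(2,1;-1,-1)+2T_\calA(3),
\]
where $M_\calA$ denotes the finite analogue of \eqref{defn:MMV}. This contains neither $T_\calA(2,1)$ nor $T_\calA(1,2)$, and both non-merged terms have two consecutive odd indices. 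In general you get depth-$(k+1)$ finite multiple mixed values with non-alternating patterns. These are neither $T_\calA$- nor $S_\calA$-values, and the $\beta_k$ and $\texttt{q}_2$ evaluations do not reach them. (Merging two $1$'s even produces height two, not height one.) So no triangular system in $X_\ell:=T_\calA(\{1\}^\ell,2,\{1\}^{k-\ell})$ comes out.

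Your fallback does not repair this. The trick in Theorem~\ref{thm:unitTA} works only because $\int_0^x\om_{-1}^k$ is a power of $\log\frac{1-x}{1+x}$. Under $u=\frac{2t}{1+t}$ one has $\om_{-1}=\frac{du}{1-u}$ but $\om_0=\frac{du}{u}+\frac{du}{2-u}$, and you do not specify the binomial identity that would close the argument.

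\textbf{The missing idea is the finite (linear) shuffle relation obtained by extracting the coefficient of $x^p$,} as in Remark~\ref{rem:unitT-oddWt}. The paper imports this proposition from \cite{Zhao2023Dec}, and its symmetric analogue, Theorem~\ref{thm:SMT121}, is proved by a shuffle telescoping.

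Let $w_1,w_2$ be words in $\om_0,\om_{-1}$ that both begin with $\om_{-1}$ and have odd total depth. Compare coefficients of $x^p$ in $\int_0^x w_1\cdot\int_0^x w_2=\int_0^x w_1\sha w_2$. On the left the coefficient is $\sum_{a=1}^{p-1}c_a(w_1)c_{p-a}(w_2)$, which is $p$-integral. On the right, by Lemma~\ref{lem:T(x)_integral}, it is $\frac{2}{p}$ times the truncated sum for $\sum_{w\in w_1\sha w_2}T_\calA(\partial w)_p$, where $\partial$ deletes the initial $\om_{-1}$. Hence $\sum_{w\in w_1\sha w_2}T_\calA(\partial w)=0$.

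Now take $w_1=\om_{-1}^j$ and $w_2=\om_{-1}\om_0\om_{-1}^{k+1-j}$ for $1\le j\le k$; the total depth is $k+2$, which is odd. Counting interleavings gives
\[
\sum_{\ell=0}^{j}(\ell+1)\binom{k+1-\ell}{k+1-j}X_\ell=0 .
\]
This system is triangular with leading coefficient $j+1$, so for $p>k+1$ it determines $X_1,\dots,X_k$ from $X_0$, without even using reversal. The claimed values
\[
X_\ell=\frac{(-1)^\ell}{\ell+1}\binom{k+1}{\ell}X_0=\frac{(-1)^\ell}{k+2}\binom{k+2}{\ell+1}X_0
\]
satisfy it. Indeed, $(\ell+1)\binom{k+2}{\ell+1}\binom{k+1-\ell}{k+1-j}=\frac{(k+2)!}{j!\,(k+1-j)!}\binom{j}{\ell}$ and $\sum_{\ell=0}^{j}(-1)^\ell\binom{j}{\ell}=0$ for $j\ge1$. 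This is exactly the finite counterpart of the telescoping in the proof of Theorem~\ref{thm:SMT121}.
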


Following the philosophy of Conjecture~\ref{conj:KanekoZagierMTV} we now present its symmetric MTV analog as follows.

\begin{thm}  \label{thm:SMT121}
If $k\in\N$ is odd, then for all $0\le \ell \le k$ we have
\begin{equation}\label{equ:SMT121}
T_\sha^\Sy(\{1\}^\ell,2,\{1\}^{k-\ell})=(-1)^{\ell}\binom{k+2}{\ell+1} T(k+2)
    =\frac{(-1)^{\ell}}{\ell+1} \binom{k+1}{\ell}  T_\sha^\Sy(2,\{1\}^k).
\end{equation}
\end{thm}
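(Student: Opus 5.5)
The second equality in \eqref{equ:SMT121} needs no new work. By \eqref{equ:TS21k} (the case $\ell=0$, proved for odd $k$ inside Theorem~\ref{thm:SMT21k}) we have $T_\sha^\Sy(2,\{1\}^k)=(k+2)T(k+2)$. Moreover
\[
\frac{1}{\ell+1}\binom{k+1}{\ell}(k+2)=\binom{k+2}{\ell+1}.
\]
So the theorem reduces to showing
\[
T_\sha^\Sy(\{1\}^\ell,2,\{1\}^{k-\ell})=(-1)^\ell\binom{k+2}{\ell+1}T(k+2)
\]
for odd $k$ and $0\le\ell\le k$.

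Since $k$ is odd, the depth $k+1$ is even, so only $T_\sha\cdot T_\sha$ products occur. I work on the word level with $\om_{-1}$ for each $1$ and $\om_0\om_{-1}$ for the $2$. Write $A=\om_{-1}$, $B=\om_0\om_{-1}$ and $\bfs=(\{1\}^\ell,2,\{1\}^{k-\ell})$. The sign in front of the $i$-th term is $(-1)^{i}$ if $i\le\ell$ and $(-1)^{i-1}$ if $i>\ell$. For $i\le \ell$ the reversed prefix has word $A^i$; for $i>\ell$ it has word $A^{i-\ell-1}BA^\ell$. Thus
\[
T_\sha^\Sy(\bfs)=\int_0^1\sum_{i=0}^{\ell}(-1)^iA^i\sha A^{\ell-i}BA^{k-\ell}
-\sum_{i=\ell+1}^{k+1}(-1)^{i}A^{i-\ell-1}BA^\ell\sha A^{k+1-i}.
\]

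I would simplify this exactly as in the proofs of Theorems~\ref{thm:unitSMT} and \ref{thm:SMT21k}. Expand each shuffle by its first letter, $xu\sha yv=x(u\sha yv)+y(xu\sha v)$, and shift the summation index in one of the two resulting sums. This makes most terms telescope, since the alternating sign flips between consecutive terms. The survivors are the boundary terms at $i=0,\ell,\ell+1,k+1$, where the word $B$ contributes a leading $\om_0$. I expect the result to be a combination of words $\om_{-1}^a\om_0\om_{-1}^{k+1-a}$ plus $\om_0(\cdots)$ terms. My guess is an expression like $(-1)^\ell\big[\om_0(\om_{-1}^{\ell+1}\sha\om_{-1}^{k-\ell})+\cdots\big]$. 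Its leading part integrates to $(-1)^\ell\binom{k+1}{\ell+1}T(k+2)$ by $T(2,\{1\}^k)=T(k+2)$ (duality \cite[Thm.~3.1]{KanekoTs2020}). The remainder should account for the missing $\binom{k+1}{\ell}T(k+2)$, since $\binom{k+1}{\ell+1}+\binom{k+1}{\ell}=\binom{k+2}{\ell+1}$. Rather than tracking all boundary terms at once, I would run an induction on $\ell$. The base case $\ell=0$ is \eqref{equ:TS21k}, and the step compares the word expansions for $\ell$ and $\ell-1$ via the same first-letter recursion. The reversal symmetry $\ell\leftrightarrow k-\ell$ gives a consistency check.

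The main obstacle is the survivors of the form $\om_{-1}^a\om_0\om_{-1}^{b}$ with $a\ge1$, i.e.\ $T(\{1\}^a,2,\{1\}^{b-1})$ terms with the $2$ not in front. Duality (Theorem~\ref{thm:dualMMVo} with all $\eps_j=-1$) sends these to height-one indices such as $T(a+1,\dots)$, which are not obviously multiples of $T(k+2)$. I would first check whether the telescoping eliminates all such interior words, leaving only words beginning with $\om_0$. If it does not, I would express the leftovers via duality as double $T$-values. I would then kill them with the double Euler sum formulas of \cite[Thm.~4.2, Cor.~4.3]{BCJXXZhao2020c} and the parity identity \eqref{equ:twoBars}, as in Case~(ii) of Theorem~\ref{thm:SMT21k}, using oddness of $k$ to cancel the $\zeta(2n)$ contributions.
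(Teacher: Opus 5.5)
Your reduction of the second equality to the first (via \eqref{equ:TS21k} and $\frac{k+2}{\ell+1}\binom{k+1}{\ell}=\binom{k+2}{\ell+1}$) is correct. So is your word-level expression for $T_\sha^\Sy(\{1\}^\ell,2,\{1\}^{k-\ell})$, signs included; it is exactly the paper's starting point.

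The gap is that the step which constitutes the whole proof, the telescoping, is left as an expectation (``I expect'', ``my guess'', ``should account''). You are unsure whether words $\om_{-1}^a\om_0\om_{-1}^b$ with $a\ge1$ survive, and the induction on $\ell$ you offer instead is not specified. The step closes cleanly if you telescope your two blocks separately. Put
\[
\sigma(a,b)=\sum_{j=0}^{a}(-1)^j\,\om_{-1}^j\sha\om_{-1}^{a-j}\om_0\om_{-1}^{b+1}.
\]
Your block $i\le\ell$ is $\sigma(\ell,k-\ell)$. Substituting $j=k+1-i$ turns your block $i\ge\ell+1$ into $(-1)^k\sigma(k-\ell,\ell)=-\sigma(k-\ell,\ell)$. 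For $a\ge1$ let $X_j=\om_{-1}\bigl(\om_{-1}^{j}\sha\om_{-1}^{a-1-j}\om_0\om_{-1}^{b+1}\bigr)$ for $0\le j\le a-1$. Expanding by first letters:
\begin{itemize}
\item the $j=0$ term of $\sigma(a,b)$ is $X_0$;
\item the $j$-th term for $1\le j\le a-1$ is $(-1)^j(X_{j-1}+X_j)$;
\item the $j=a$ term is $(-1)^a\bigl(X_{a-1}+\om_0(\om_{-1}^a\sha\om_{-1}^{b+1})\bigr)$.
\end{itemize}
All $X_j$ cancel, so for all $a,b\ge0$
\[
\sigma(a,b)=(-1)^a\,\om_0\bigl(\om_{-1}^a\sha\om_{-1}^{b+1}\bigr)=(-1)^a\binom{a+b+1}{a}\,\om_0\om_{-1}^{a+b+1}.
\]

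Hence no word beginning with $\om_{-1}$ survives. Using $\int_0^1\om_0\om_{-1}^{k+1}=T(2,\{1\}^k)=T(k+2)$ and the oddness of $k$, you obtain
\[
\Bigl((-1)^\ell\binom{k+1}{\ell}-(-1)^{k-\ell}\binom{k+1}{k-\ell}\Bigr)T(k+2)=(-1)^\ell\binom{k+2}{\ell+1}T(k+2).
\]
The ``missing'' $(-1)^\ell\binom{k+1}{\ell}T(k+2)$ is the $\om_0$-part of your $i=\ell$ term, the partner of the $i=\ell+1$ survivor you correctly guessed.

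So the obstacle you anticipate does not arise, and the fallback via the double Euler sum formulas is unnecessary. That fallback is also misconceived as stated. Words beginning with $\om_{-1}$ are divergent, so Theorem~\ref{thm:dualMMVo} cannot be applied to them without first regularizing. Their formal duals end in $\om_0$, so they are not admissible height-one $T$-values.
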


\begin{proof}
To prove the first equation, let $n=k-\ell$. Then, by definition
\begin{align}
T_\sha^\Sy(\{1\}^\ell,2,\{1\}^n)=&\, \sum_{j=0}^\ell (-1)^j T_\sha(\{1\}^j) T_\sha(\{1\}^{\ell-j},2,\{1\}^n) \notag\\
&\, +\sum_{j=0}^n (-1)^{n-j+\ell} T_\sha(\{1\}^{n-j},2,\{1\}^\ell) T_\sha(\{1\}^j) \notag\\
=&\,\sigma(\ell,n)-\sigma(n,\ell)   \label{equ:ht1}
\end{align}
since $n+\ell=k$ is odd, where $\sigma(\ell,n)$ denotes the first sigma sum. Thus,
\begin{align*}
\sigma(\ell,n)= &\,\int_0^1 \sum_{j=0}^\ell (-1)^j   \om_{-1}^j \sha \om_{-1}^{\ell-j} \om_0\om_{-1}^{n+1}   \\
=&\,\int_0^1 \om_{-1}^{\ell} \om_0\om_{-1}^{n+1} + (-1)^\ell   \om_{-1}( \om_{-1}^{\ell-1}  \sha  \om_0\om_{-1}^{n+1})
 + (-1)^\ell   \om_0( \om_{-1}^\ell \sha  \om_{-1}^{n+1}) \\
&\,  +\sum_{j=1}^{\ell-1}  (-1)^j     \om_{-1}(\om_{-1}^{j-1} \sha \om_{-1}^{\ell-j} \om_0\om_{-1}^{n+1} )
+\sum_{j=1}^{\ell-1}  (-1)^j     \om_{-1}(\om_{-1}^j \sha \om_{-1}^{\ell-j-1} \om_0\om_{-1}^{n+1} )\\
=&\,   (-1)^\ell \int_0^1   \om_0( \om_{-1}^\ell \sha  \om_{-1}^{n+1}) \qquad(\text{by $j\to j-1$ in the last sigma sum})\\
=&\,   (-1)^\ell \int_0^1  \binom{\ell+n+1}{\ell} \om_0\om_{-1}^{\ell+n+1}\\
=&\,  (-1)^\ell  \binom{\ell+n+1}{\ell}  T(k+2).
\end{align*}
By \eqref{equ:ht1}, since $\ell+n=k$ is odd we get
\begin{align*}
T_\sha^\Sy(\{1\}^\ell,2,\{1\}^n)=&\,\left( (-1)^\ell   \binom{\ell+n+1}{\ell} - (-1)^n   \binom{\ell+n+1}{n}\right)T(k+2) \\
=&\,(-1)^\ell \left( \binom{k+1}{\ell}+\binom{k+1}{\ell+1}\right)T(k+2)    \\
=&\,(-1)^{\ell}\binom{k+2}{\ell+1}  T(k+2).
\end{align*}
This shows the first equation in \eqref{equ:SMT121}. It is clear that the second equation follows immediately from the first by taking $\ell=0$. This concludes the proof of the theorem.
\end{proof}

\begin{cor}\label{cor:SMTV21k}
If $k\in\N$ is odd, then we have
\begin{equation}\label{equ:SMTV21k}
T_\sha^\Sy(2,\{1\}^k)=\frac{(k+2)(2^{k+2}-1)}{2(2^{k+1}-1)}T_\sha^\Sy(k+2).
\end{equation}
\end{cor}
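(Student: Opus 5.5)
We combine the $\ell=0$ case of Theorem~\ref{thm:SMT121} with the depth-one evaluation \eqref{equ:SMTdp1}. Taking $\ell=0$ in \eqref{equ:SMT121} gives
\begin{equation*}
T_\sha^\Sy(2,\{1\}^k)=\binom{k+2}{1}T(k+2)=(k+2)T(k+2).
\end{equation*}
This agrees with \eqref{equ:TS21k} from the proof of Theorem~\ref{thm:SMT21k}. So it only remains to express $T(k+2)$ as a rational multiple of $T_\sha^\Sy(k+2)$.

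Set $m=k+2$. Since $k$ is odd, $m$ is odd and $m\ge 3$. By \eqref{equ:SMTdp1}, $T_\sha^\Sy(m)=T(m)-S(m)=-2\zeta(\ol{m})$. Using $\zeta(\ol{m})=-(1-2^{1-m})\zeta(m)$, this becomes
\begin{equation*}
T_\sha^\Sy(m)=2(1-2^{1-m})\zeta(m).
\end{equation*}
On the other hand, $T(m)=\zeta(m)-\zeta(\ol m)=(2-2^{1-m})\zeta(m)$, as already noted at the end of the proof of Theorem~\ref{thm:SMT21k}. Dividing, we get
\begin{equation*}
\frac{T(m)}{T_\sha^\Sy(m)}=\frac{2-2^{1-m}}{2-2^{2-m}}=\frac{2^{m}-1}{2(2^{m-1}-1)}=\frac{2^{k+2}-1}{2(2^{k+1}-1)}.
\end{equation*}
This is the same ratio as the relation $T_\sha^\Sy(n)=(2-2^{2-n})/(2-2^{1-n})\,T(n)$ quoted in the introduction. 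Multiplying by $(k+2)$ gives \eqref{equ:SMTV21k}.

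There is no real obstacle here. The only step needing care is the elementary identity $\zeta(\ol m)=-(1-2^{1-m})\zeta(m)$ and the sign conventions in \eqref{equ:SMTdp1}. In particular, for odd $m$ we have $(-1)^mS(m)=-S(m)$, so $T_\sha^\Sy(m)=T(m)-S(m)$, which explains why only $-2\zeta(\ol m)$ survives. We also need $2^{k+1}-1\neq 0$, which holds since $k\ge 1$.
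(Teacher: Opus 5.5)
Your proposal is correct and follows essentially the same route as the paper. Setting $\ell=0$ in Theorem~\ref{thm:SMT121} gives $T_\sha^\Sy(2,\{1\}^k)=(k+2)T(k+2)$, and the depth-one evaluations $T(k+2)=(2-2^{-1-k})\zeta(k+2)$ and $T_\sha^\Sy(k+2)=-2\zeta(\ol{k+2})=2(1-2^{-1-k})\zeta(k+2)$ then yield the stated ratio.
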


\begin{proof} Since
\begin{align*}
  T(k+2) =&\, \zeta(k+2) - \zeta(\ol{k+2})=(2-2^{-1-k}) \zeta(k+2),\\
T_\sha^\Sy(k+2) =&\, T(k+2)-S(k+2)=-2\zeta(\ol{k+2})=2(1-2^{-1-k}) \zeta(k+2).
\end{align*}
Hence, \eqref{equ:SMTV21k} follows from Theorem~\ref{thm:SMT121} immediately.
\end{proof}

\section{Duality relations among finite and symmetric MTVs}
In this section, we  present some duality type results on both finite and symmetric MTVs involving height one indices. Recall that the star values $\zeta^\star$ are defined by allowing equalities among the summation indices in \eqref{defn:MZV}. One can similarly define the star versions of the finite MZVs for which Hoffman (see \cite{Hoffman2004} and \cite[Thm. 8.5.11]{Zhao2016}) discovered that, for any $\bfs\in\N^d$,
\begin{equation}\label{equ:FMZVv-dual}
\zeta_\calA^\star(\bfs)=-\zeta_\calA^\star(\bfs^\vee),
\end{equation}
where $\bfs^\vee$ is defined by swapping the plus signs ``$+$'' and the commas ``,'' in
\begin{equation*}
\bfs=(\underbrace{1+\cdots+1}_{\text{$s_1$ times}},\cdots,\underbrace{1+\cdots+1}_{\text{$s_d$ times}}).
\end{equation*}
For example, $(2,1,1)^\vee=(1,3)$. Moreover, Jarossay showed that the symmetric MZVs also satisfy the same type of duality relation similar to \eqref{equ:FMZVv-dual} modulo $\zeta(2)$ (see  \cite[Corollary 1.12]{Jarossay2014} or \cite[Theorem 6.3.5]{Zhao2016}) .

Even though MTVs satisfy exactly the same duality relations satisfied by the MZVs, the finite and symmetric MTVs in general do not possess the same type of duality as that appearing in \eqref{equ:FMZVv-dual}. For example, for $p=101$,
\begin{equation*}
T_\calA^\star(2,1,1)_p\equiv 57, \quad T_\calA^\star(1,3)_p\equiv 22  \pmod{p}.
\end{equation*}

However, Theorem~\ref{thm:unitSMT}, Corollary~\ref{cor:unitT-oddWt}, Theorem~\ref{thm:SMT21k}, and Corollary~\ref{cor:TA21_k} state that
\begin{align*}
T_\calA(\bfs)=&\, \delta(\bfs) T_\calA(\bfs^\vee),\quad  T_\sha^\Sy(\bfs)\equiv \delta(\bfs)T_\sha^\Sy(\bfs^\vee) \pmod{\zeta(2)\ES_{k-2}^{(1)}}
\end{align*}
where
\begin{enumerate}
    \item [\upshape{(1)}] $\delta(\bfs)=1$ for $\bfs=(\{1\}^k)$
    \item [\upshape{(2)}] $\delta(\bfs)=(-1)^k$ for $\bfs=(2,\{1\}^k)$, where $n,k\in\N$;
\end{enumerate}

Further, we have the following duality type results.

\begin{thm}\label{thm:dblTs}
Suppose $a,b\in\N$ and $w=a+b$ is odd. Then
\begin{align*}
S_\calA(a,b)=T_\calA(a,b)=&\, (-1)^a \binom{w}{a} (2-2^{1-w}) \beta_w =-T_\calA((a,b)^\vee), \\
S_\sha^\Sy(a,b)=T_\sha^\Sy(a,b)\equiv&\,  (-1)^{a}\binom{w}{a}(2-2^{1-w}) \zeta(w)\equiv -T_\sha^\Sy((a,b)^\vee) \pmod{\zeta(2)\ES_{k-2}^{(1)}}.
\end{align*}
\end{thm}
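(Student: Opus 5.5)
The plan is to handle the finite and the symmetric statements separately, following Section~2, and to use the height-one results for the duality part. First, $(a,b)^\vee=(\{1\}^{a-1},2,\{1\}^{b-1})$. This is a height-one index with $\ell=a-1$ and $k=w-2$, and $k$ is odd.

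\emph{Duality part.} On the symmetric side, Theorem~\ref{thm:SMT121} gives
\[
T_\sha^\Sy((a,b)^\vee)=(-1)^{a-1}\binom{w}{a}T(w),
\qquad T(w)=(2-2^{1-w})\zeta(w).
\]
So the last congruence in the statement reduces to the middle one. On the finite side, the theorem stated in the introduction gives
\[
T_\calA((a,b)^\vee)=(-1)^{a-1}\binom{w}{a}\frac{2^w-1}{2(2^{w-1}-1)}T_\calA(w).
\]
That theorem combines Proposition~\ref{prop:FMT1ell2lk} with the later evaluation of $T_\calA(2,\{1\}^k)$, which I will take as given. Corollary~\ref{cor:unitT-oddWt} gives $T_\calA(w)=2(1-2^{1-w})\beta_w$, and
\[
\frac{2^w-1}{2(2^{w-1}-1)}\cdot 2(1-2^{1-w})=2-2^{1-w}.
\]
Hence $-T_\calA((a,b)^\vee)=(-1)^a\binom{w}{a}(2-2^{1-w})\beta_w$, as required.

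\emph{Evaluating $T_\calA(a,b)$ and $S_\calA(a,b)$.} I will use the reflection $n\mapsto p-n$, which swaps parities and reverses the order. Since $w$ is odd, this gives $T_\calA(a,b)=-S_\calA(b,a)$. Next I expand the parity indicators: $4\cdot[2\mid n_1][2\nmid n_2]=(1+(-1)^{n_1})(1-(-1)^{n_2})$. This writes $T_\calA(a,b)$ as $\pm$ combinations of $\zeta_\calA(a,b)$, $\zeta_\calA(\bar a,b)$, $\zeta_\calA(a,\bar b)$ and $\zeta_\calA(\bar a,\bar b)$. I then insert the known odd-weight evaluations of finite double Euler sums from \cite[\S8]{Zhao2016}. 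Each of these is a rational multiple of $\beta_w$ of the form $\binom{w}{a}(\text{power-of-}2\text{ factor})\beta_w$; any $\texttt{q}_2$-type terms should cancel because the weight is odd. Collecting terms should give $(-1)^a\binom{w}{a}(2-2^{1-w})\beta_w$. This value is antisymmetric under $a\leftrightarrow b$ when $w$ is odd, so $S_\calA(a,b)=-T_\calA(b,a)=T_\calA(a,b)$.

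\emph{Symmetric side.} By definition,
\[
T_\sha^\Sy(a,b)=T_\sha(a,b)+(-1)^aS_\sha(a)T_\sha(b)-T_\sha(b,a).
\]
When $a=1$ or $b=1$ I use the regularizations of \cite[Prop.~2.8]{XuZhao2020a}. I rewrite every term as double Euler sums. The parity theorem for double Euler sums of odd weight then expresses each $\zeta(\pm a,\pm b)$ as a rational multiple of $\zeta(w)$ plus products $\zeta(\text{even})\cdot(\text{single Euler sum})$; these products vanish modulo $\zeta(2)\ES_{w-2}$. Alternatively I can use the sum formulas of \cite{BCJXXZhao2020c}, as in the proof of Theorem~\ref{thm:SMT21k}. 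The analogous computation for $S_\sha^\Sy(a,b)$, which has the roles of $S$ and $T$ swapped, gives the same $\zeta(w)$-coefficient.

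\emph{Main obstacle.} The hard step is tracking the exact coefficient of $\zeta(w)$ (respectively $\beta_w$) through these explicit parity formulas. I would organize this bookkeeping with generating functions in $x^{a-1}y^{b-1}$, as in the use of \cite[Prop.~4.1]{BCJXXZhao2020c} above. Then $(-1)^a\binom{w}{a}$ should appear as the coefficient of $(x-y)^{w-1}$-type expressions.
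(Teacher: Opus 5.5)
Your duality half matches the paper. $(a,b)^\vee=(\{1\}^{a-1},2,\{1\}^{b-1})$ is evaluated by Theorem~\ref{thm:SMT121} on the symmetric side, and on the finite side by Corollary~\ref{cor:TA21_k}, or equivalently the introduction's theorem plus Corollary~\ref{cor:unitT-oddWt}, as you do. Your simplification of the $2$-power factor is correct.

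The genuine gap is your derivation of $S_\calA(a,b)=T_\calA(a,b)$. The reflection $n\mapsto p-n$ does not give $T_\calA(a,b)=-S_\calA(b,a)$. It flips every parity, but it also reverses the order of the indices, so the depth-two pattern (larger index even, smaller index odd) is sent to itself. What reflection actually yields is $T_\calA(a,b)=(-1)^wT_\calA(b,a)$. This is the even-depth reversal relation $T_\calA(\bfs)=(-1)^{|\bfs|}T_\calA(\overleftarrow{\bfs})$ that the paper invokes in Theorem~\ref{thm:v-DualReversal}; $T$ and $S$ are interchanged only in odd depth. Your argument, run in weight $2$, would give $T_\calA(1,1)=S_\calA(1,1)$. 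But $T_\calA(1,1)_7\equiv 0$ while $S_\calA(1,1)_7\equiv 5\pmod 7$. So the antisymmetry of your closed form only checks consistency of $T_\calA$, and $S_\calA(a,b)$ is never evaluated. The identity $T_\calA(a,b)=-S_\calA(b,a)$ is true, but only as a consequence of the theorem.

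A repair inside your framework:
\begin{itemize}
\item Expanding both parity indicators gives $T_\calA(a,b)-S_\calA(a,b)=2\bigl(\zeta_\calA(\ol{a},b)-\zeta_\calA(a,\ol{b})\bigr)$.
\item For odd $w$, the correct reflection gives $\zeta_\calA(\ol{a},b)=\zeta_\calA(b,\ol{a})$ and $\zeta_\calA(a,\ol{b})=\zeta_\calA(\ol{b},a)$.
\item The stuffle relations together with $\zeta_\calA(a)=\zeta_\calA(b)=0$ then force both to equal $-\frac12\zeta_\calA(\ol{w})$.
\end{itemize}
The paper avoids this by quoting \cite[(42)]{Zhao2024a} for both double values.

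On the symmetric side there is first a slip. In even depth the definition uses $T_\sha(a)T_\sha(b)$, not $S_\sha(a)T_\sha(b)$. This is harmless, since one of $a,b$ is even and the product lies in $\zeta(2)\ES_{w-2}$ either way.

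The substantive problem is that you never compute the coefficient of $\zeta(w)$, which is the entire content of the evaluation; you leave it as bookkeeping that ``should'' come out right. The claim that $S_\sha^\Sy(a,b)$ has the same coefficient is also only asserted. The paper imports exactly this from the proof of \cite[Cor. 4.2]{XuZhao2023Oct}: modulo $\zeta(2)\ES_{w-2}$,
\[
T_\sha(a,b)-T_\sha(b,a)\equiv\Bigl((-1)^a\binom{w-1}{b-1}-(-1)^b\binom{w-1}{a-1}\Bigr)T(w),
\]
and likewise with $S$ in place of $T$ on the left. The bracket equals $(-1)^a\binom{w}{a}$ by Pascal's rule, because $(-1)^b=-(-1)^a$. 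Your route through the parity theorem or the sum formulas of \cite{BCJXXZhao2020c} could produce this in principle. Until it is carried out, the symmetric evaluation is a plan rather than a proof.
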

\begin{proof}
We observe that for any $a,b\in\N$,
\begin{equation*}
(a,b)^\vee=(\{1\}^{a-1},2,\{1\}^{b-1}).
\end{equation*}

The first evaluation of double finite $S$- and $T$-values in the theorem is given by \cite[(42)]{Zhao2024a} (notice again that the extra factor of $2^d$ in the definition of MTVs in this paper). The second follows from Corollary~\ref{cor:TA21_k}.

Turning to the symmetric MTVs, from the proof of \cite[Cor. 4.2]{XuZhao2023Oct} (note that the definite
of MTVs there is opposite to that in this paper), we see that modulo $\zeta(2)\ES_{w-2}$
\begin{align*}
T_\sha^\Sy(a,b)=&\, T_\sha(a,b)+(-1)^{a}T_\sha(a)T_\sha(b)-T_\sha(b,a) \\
\equiv &\, \left( (-1)^{a}\binom{w-1}{b-1} -(-1)^{b}\binom{w-1}{a-1}\right) T(w) \\
\equiv  &\, (-1)^{a}\binom{w}{a}T(w)  \\
\equiv  &\, (-1)^{a}\binom{w}{a}(2-2^{1-w}) \zeta(w).
\end{align*}
Similarly,
\begin{align*}
S_\sha^\Sy(a,b)=&\, S_\sha(a,b)+(-1)^{a}S_\sha(a)S_\sha(b)-S_\sha(b,a) \\
\equiv &\, \left( (-1)^{a}\binom{w-1}{b-1} -(-1)^{b}\binom{w-1}{a-1}\right) T(w) \\
\equiv  &\, (-1)^{a}\binom{w}{a}T(w)  \\
\equiv  &\, (-1)^{a}\binom{w}{a}(2-2^{1-w}) \zeta(w).
\end{align*}
Comparing with \eqref{equ:SMT121}, we obtain the theorem immediately.
\end{proof}

Our next result shows that sometimes a duality holds only after taking a reversal. We will need the following two lemmas.

\begin{lem}\label{lem:T(x)_integral} \emph{(\cite[Line 8]{Zhao2023Dec})}
For any positive integers $s_1,\dots,s_d$ and $|x|<1$, we have
\begin{equation*}
\sum_{\substack{n_1>\cdots>n_d>0\\n_j\equiv d-j+1}}\frac{2^dx^{n_1}}{n_1^{s_1}\cdots n_d^{s_d}}
=\int_0^{x}\om_0^{s_1-1}\om_{-1}\cdots \om_0^{s_d-1}\om_{-1}.
\end{equation*}
\end{lem}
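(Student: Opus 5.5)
The plan is to build the integral as a generating function of the upper limit $x$ and verify it by differentiation, inducting on the total weight $|\bfs|=s_1+\cdots+s_d$. Write $F_{\bfs}(x)$ for the left-hand side and $I_{\bfs}(x)$ for the iterated integral on the right. In the convention of \eqref{equ:MZVitIntegral}, the leftmost form is attached to the outermost variable, so $I_{\bfs}(x)=\int_0^x \om_0(t)\,I_{\bfs'}(t)$ when $s_1\ge 2$, with $\bfs'=(s_1-1,s_2,\dots,s_d)$. When $s_1=1$ we have instead $I_{\bfs}(x)=\int_0^x \frac{2\,dt}{1-t^2}\,I_{(s_2,\dots,s_d)}(t)$. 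The base case $d=0$ is the empty integral, equal to $1$, which matches the empty sum. Both sides are analytic on $|x|<1$ and vanish at $x=0$, so it suffices to match the derivatives.

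First suppose $s_1\ge2$. Termwise differentiation gives $x\,\frac{d}{dx}F_{\bfs}(x)=F_{\bfs'}(x)$, since lowering the exponent of $n_1$ by one leaves the parity conditions unchanged. This matches $x\,\frac{d}{dx}I_{\bfs}=I_{\bfs'}$, and the induction hypothesis finishes this case.

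The main step is the case $s_1=1$. Put $\bfs''=(s_2,\dots,s_d)$, which has depth $d-1$. The parity conditions on $n_2,\dots,n_d$ for $\bfs$ are $n_j\equiv d-j+1$, and these coincide with the conditions $n_{j}\equiv (d-1)-(j-1)+1$ for $\bfs''$ after reindexing. The only extra constraint is $n_1\equiv d\pmod 2$ with $n_1>n_2$, and $n_2\equiv d-1$ has the opposite parity. Hence $n_1=n_2+2m+1$ with $m\ge0$. Differentiating gives
\begin{equation*}
\frac{d}{dx}F_{\bfs}(x)=\sum 2^{d-1}\frac{1}{n_2^{s_2}\cdots n_d^{s_d}}\cdot 2\sum_{m\ge0}x^{n_2+2m}=\frac{2}{1-x^2}\,F_{\bfs''}(x),
\end{equation*}
using $\sum_{m\ge0}x^{2m}=1/(1-x^2)$. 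When $d=1$ the same computation applies with $n_2:=0$, since $n_1$ is then odd, and it gives $\frac{2}{1-x^2}$. This matches $\frac{d}{dx}I_{\bfs}=\frac{2}{1-x^2}I_{\bfs''}$, and induction closes the argument.

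The only delicate points are bookkeeping ones. One must check that the parity pattern shifts correctly when the first component is stripped off: the offset $d-j+1$ becomes $(d-1)-(j-1)+1$, which is the same value. One must also check that termwise differentiation is justified, which holds because all the series converge absolutely and locally uniformly on $|x|<1$. With these in place, the identity holds for all positive integers $s_j$, with no admissibility required.
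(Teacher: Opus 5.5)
Your proof is correct. The paper gives no argument of its own for this lemma and simply quotes it from \cite{Zhao2023Dec}; your induction is the standard verification behind such identities: peel off the outermost form and compare derivatives, which is the same as integrating $\om_0$ and $\om_{-1}=2\sum_{m\ge0}t^{2m}\,dt$ term by term, with the parity shift $n_1=n_2+2m+1$ producing the factor $2/(1-x^2)$.
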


\begin{lem}\label{lem:F(x)}
For any index $\bfs=(s_1,\dots,s_d)\in\N^d$ with $s_d\ge 2$, define $\bfs-1:=(s_1,\dots,s_d-1).$
Then, for any prime $p$, we have
\begin{align}\label{equ:keyReduction}
T_\calA(\bfs,\{1\}^k)_p &\equiv -\sum_{j=0}^{p-k-2} \binom{p-k-1}{j}  2^jB_j\Big(\frac12\Big) \frac{T_\calA(\bfs-1,\{1\}^{k+j})_p }{p-k-1}  \pmod{p}.
\end{align}
\end{lem}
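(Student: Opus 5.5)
The plan is to reduce \eqref{equ:keyReduction} to a congruence about the innermost ``tail'' sums, which do not involve $\bfs$ at all. Write $\bfs=(s_1,\dots,s_d)$ and fix a prime $p$ (we may assume $p>k+2$, since otherwise the statement is vacuous or trivial). In the $p$-component of $T_\calA(\bfs,\{1\}^k)$, isolate the summation variable $m=n_d$. This gives
\begin{equation*}
T_\calA(\bfs,\{1\}^k)_p=\sum_{0<m<p} \frac{H_{\bfs}(m)}{m^{s_d}}\, V_k(m),
\end{equation*}
where $H_{\bfs}(m)$ collects the sum over $p>n_1>\dots>n_{d-1}>m$ with their parity conditions. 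The factor $V_k(m)$ is the parity-restricted tail sum
\begin{equation*}
V_k(m)=\sum_{m>n_{d+1}>\dots>n_{d+k}>0} \frac{2^k}{n_{d+1}\cdots n_{d+k}},
\end{equation*}
with the alternating parities dictated by \eqref{defn:MTV}. The same decomposition applies to $T_\calA(\bfs-1,\{1\}^{k+j})_p$, with $1/m^{s_d-1}$ in place of $1/m^{s_d}$ and $V_{k+j}$ in place of $V_k$. The overall parity of the depth changes with $j$, so I would first check that the Bernoulli terms with the wrong parity vanish. This should follow because $B_j(1/2)=0$ for odd $j$.

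Hence it suffices to prove a tail identity, for every $0<m<p$ of the relevant parity:
\begin{equation*}
\frac{V_k(m)}{m}\equiv -\frac{1}{p-k-1}\sum_{j=0}^{p-k-2}\binom{p-k-1}{j}2^jB_j\Big(\frac12\Big)V_{k+j}(m) \pmod p.
\end{equation*}
I would attack this with generating functions. By Lemma~\ref{lem:T(x)_integral}, the generating series of the tails is
\begin{equation*}
\sum_m V_k(m)x^m \doteq \frac{1}{k!}\log^k\frac{1+x}{1-x}
\end{equation*}
up to the harmless shift by one summation step. The series $\sum_j 2^jB_j(1/2)u^j/j!=2u/\sinh(u)\cdot\tfrac12\cdots$ is exactly the Bernoulli generating function $ue^{u/2}/(e^u-1)$ evaluated at $2u$. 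Its appearance is natural once one substitutes $u=\tfrac12\log\frac{1+x}{1-x}$, i.e.\ $x=\tanh u$. So the idea is to express the operation ``divide the $x^m$-coefficient by $m$'' (an integration $\int dx/x$) in the variable $u$.

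There is an obstruction: modulo $p$ this is only legitimate for $m<p$. So I would instead use Fermat's little theorem, writing $1/m\equiv m^{p-2}$. The factor $V_k(m)/m$ then becomes a parity-restricted power sum of the shape
\begin{equation*}
\sum_{\substack{n<m\\ n\text{ of fixed parity}}}n^{p-k-2}\cdot(\text{lower tail}).
\end{equation*}
Evaluating it by Faulhaber's formula for sums over an arithmetic progression of step $2$ produces $\frac{2^{r}}{r+1}\big(B_{r+1}(\cdot)\big)$ terms, which expand in $\binom{p-k-1}{j}2^jB_j(1/2)$. This accounts for both the binomial coefficient and the denominator $p-k-1$. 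The leftover powers of $m$ are reabsorbed into longer unit tails $V_{k+j}(m)$ by running the same argument backwards, i.e.\ by induction on $k$ starting from $k=0$, where $V_0=1$ and the identity is Faulhaber's formula itself.

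The main obstacle is this matching step. One must show that after substituting $1/n\equiv n^{p-2}$ throughout, the resulting polynomial expressions in $m$ recombine exactly into $V_{k+j}(m)$ modulo $p$, with no boundary terms left over. I expect those boundary terms to come from the endpoints $n=m$ and $n=0$ of the parity-restricted power sums and from the range $j=p-k-1$, which is excluded from the sum. I would first verify the tail identity numerically for small $k$ and $p$ to pin down the precise parity convention, and then carry out the induction on $k$ using the $\tanh$-substitution as a guide to organize the coefficients.
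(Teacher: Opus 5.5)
Your reduction is sound. For even $j$ the prefix parities of $T_\calA(\bfs-1,\{1\}^{k+j})$ agree with those of $T_\calA(\bfs,\{1\}^k)$, and odd $j$ drop out because $B_j(\frac12)=0$. So the lemma does follow from your pointwise tail congruence for $0<m<p$, $m\equiv k+1\pmod 2$. This is a cleaner formulation than the paper's, which pushes the whole word through duality and $t\mapsto e^t$ although its real content is this one-variable identity. The gap is that the tail congruence \emph{is} the whole lemma, and you do not prove it. Your Fermat/Faulhaber induction is only sketched, its ``matching step'' is explicitly left open, and the base case is misidentified. At $k=0$ the identity reads $1/m=\sum_j 2^jB_j(\frac12)V_j(m)$ (e.g.\ $m=3$: $\frac13=1-\frac13\cdot 2$). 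That involves the nested sums $V_j(m)$ for every $j$, so it is not Faulhaber's formula. Likewise, an exponent $p-k-2$ arises from a single tail entry $1/n^{k+1}$, as in Lemma~\ref{lem:S(x)}, not from a depth-$k$ unit tail. It is therefore unclear how the resulting powers of $m$ would ``recombine'' into $V_{k+j}(m)$.

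The obstruction that made you drop the $\tanh$ idea is not real: every index in a $p$-component is already $<p$, and the tail identity holds \emph{exactly over $\Q$}. For $m\equiv k+1\pmod 2$,
\begin{equation*}
\frac{V_k(m)}{m}=\sum_{j\ge 0}2^jB_j\Big(\frac12\Big)\frac{1}{k+j+1}\binom{k+j+1}{j}V_{k+j}(m),
\end{equation*}
which is a finite sum since $V_{k+j}(m)=0$ once $k+j\ge m$. To see it, put $z=\log\frac{1+x}{1-x}$, so $x=\tanh\frac z2$. Lemma~\ref{lem:T(x)_integral} gives, with $m\equiv k+1$, resp.\ $m\equiv k+j+1\pmod 2$,
\[
\sum_m\frac{V_k(m)}{m}x^m=\frac{z^{k+1}}{2(k+1)!},\qquad
\sum_m V_{k+j}(m)x^m=\frac{x}{1-x^2}\cdot\frac{z^{k+j}}{(k+j)!}=\frac{\sinh z}{2}\cdot\frac{z^{k+j}}{(k+j)!}.
\]
So the identity reduces to $z/\sinh z=\sum_j 2^jB_j(\frac12)z^j/j!$. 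Note that your ``harmless shift'' $\frac{x}{1-x^2}$ is precisely where the Bernoulli numbers come from. For $m<p$ only $j\le p-k-2$ occur, and all coefficients are $p$-integral: von Staudt--Clausen applies since $j<p-1$, and $k+j+1<p$. Finally, $\frac{1}{k+j+1}\binom{k+j+1}{j}\equiv\frac{-(-1)^j}{p-k-1}\binom{p-k-1}{j}\pmod p$, which yields your congruence. This is exactly the $d=1$ case, read coefficientwise, of the paper's identity $F_k(\bfs;x)=\sum_j 2^jB_j(\frac12)\frac{1}{k+j+1}\binom{k+j+1}{j}F_{j+k}(\bfs-1;x)$.
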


\begin{proof}
Consider the generating function
\begin{equation*}
F_k(\bfs;x):=\sum_{\substack{n_1>\cdots >n_{d+k}>0\\ n_j\equiv d+k+1-j\pmod 2}}\frac{x^{n_1}}{n_1^{s_1}\cdots n_d^{s_d}n_{d+1}\cdots n_{d+k}}
\end{equation*}
We recall that the duality of MTVs is proved by a change of variable $t\leftrightarrow  (1-t)/(1+t)$ under which $\om_0 \leftrightarrow -\om_{-1}$. Then, all the signs cancel by reversing the path.
Hence, applying Lemma \ref{lem:T(x)_integral} and the duality trick, we find
\begin{alignat*}{4}
F_k(\bfs;x)&=\int_0^{x}\om_0^{s_1-1}\om_{-1}\cdots\om_0^{s_d-1}\om_{-1}^{k+1}\\
&=\frac{1}{(k+1)!}\int_0^x\om_0^{s_1-1}\om_{-1}\cdots\om_0^{s_d-1}\left(\int_0^{t} \om_{-1}\right)^{k+1} \quad &&\text{(shuffle product)}\\
&=\frac{(-1)^{k+1}}{(k+1)!}\int_0^x\om_0^{s_1-1}\om_{-1}\cdots\om_0^{s_d-1}\Bigg\{\frac{1}{t}\log^{k+1}\left(\frac{1-t}{1+t}\right)dt\Bigg\}\\
&=\frac{(-1)^{k+1}}{(k+1)!}\int_{\frac{1-x}{1+x}}^{1}\Bigg\{\frac{2}{1-t^2}\log^{k+1}\left(t\right)dt\Bigg\}\om_{-1}^{s_d-2} \om_0\cdots\om_0\om_{-1}^{s_1-1} \quad&&\text{(duality)}\\
&=\frac{(-1)^{k+1}}{(k+1)!}\int_{\log\left(\frac{1-x}{1+x}\right)}^{0}
\Bigg\{\frac{2t^{k+1}e^tdt}{1-e^{2t}}\Bigg\}\tom_{-1}^{s_d-2} \tom_0\cdots\tom_0\tom_{-1}^{s_1-1}  \quad &&(\text{by }t\to e^t) \\
&=\frac{(-1)^{k}}{(k+1)!}\int_{\log\left(\frac{1-x}{1+x}\right)}^{0}
\left(\sum_{j=0}^{\infty}\frac{2^jB_j\Big(\frac12\Big)}{j!}t^{j+k} dt\right) \tom_{-1}^{s_d-2} \tom_0\cdots\tom_0\tom_{-1}^{s_1-1} &&
\end{alignat*}
by using the generating function of Bernoulli polynomials at $1/2$. Here, $\tom_\ga=\om_\ga(e^t)$ for $\ga=0,-1.$ Moreover, it is easy to check that when $|x|<1-1/e$ the power series converges uniformly. Reversing the path of the iterated integral, we obtain
\begin{alignat*}{4}
F_k(\bfs;x)&=\frac{(-1)^{k+|\bfs|-1}}{(k+1)!}\sum_{j=0}^{\infty}\frac{2^jB_j\Big(\frac12\Big)}{j!} \int_0^{\log\left(\frac{1-x}{1+x}\right)}\tom_{-1}^{s_1-1}\tom_0\cdots
 \tom_0\tom_{-1}^{s_d-2}\left(\int_0^t dt\right)^{j+k} \,dt\\
&=\frac{(-1)^{k+|\bfs|-1}}{(k+1)!}\sum_{j=0}^{\infty}\frac{2^jB_j\Big(\frac12\Big)}{j!}(j+k)!
\int_0^{\log\left(\frac{1-x}{1+x}\right)}
 \tom_{-1}^{s_1-1}\tom_0\cdots
 \tom_0\tom_{-1}^{s_d-2} (dt)^{j+k+1} \quad&&\ \text{(shuffle product)}\\
&=\frac{1}{(k+1)!}\sum_{j=0}^{\infty}\frac{(-2)^jB_j\Big(\frac12\Big)}{j!}(j+k)!
    \int_{\frac{1-x}{1+x}}^{1} \om_0^{j+k+1}\om_{-1}^{s_d-2} \om_0\cdots\om_0\om_{-1}^{s_1-1} \quad&&\ \text{(path reversal)}\\
&=\frac{1}{(k+1)!}\sum_{j=0}^{\infty}\frac{(-2)^jB_j\Big(\frac12\Big)}{j!}(j+k)!
    \int_0^{x}\om_0^{s_1-1}\om_{-1}\cdots\om_0^{s_d-2}\om_{-1}^{j+k+1}\quad&&\ \text{(duality)}\\
&=\sum_{j=0}^{\infty} 2^jB_j\Big(\frac12\Big) \frac{1}{k+j+1} \binom{k+j+1}{j} F_{j+k}(\bfs-1;x),
\end{alignat*}
where we have removed the sign $(-1)^j$ since $B_j(1/2)=(2^{1-j}-1)B_j=0$ when $j$ is odd.
For any prime $p$, taking the sum of coefficients of $x^j$ for all $0<j<p$ on both sides of the above, we find
\begin{align}\label{equ:FMTVbfs1k}
&T_\calA(\bfs,\{1\}^k)_p  \equiv \sum_{j=0}^{p-k-d-1}2^jB_j\Big(\frac12\Big) \frac{1}{k+j+1} \binom{k+j+1}{j} T_\calA(\bfs-1,\{1\}^{j+k})_p    \pmod{p}
\end{align}
since the smallest nontrivial power in $F_{\bfs-1,j+k}(x)$ is $j+k+d$. Further, we have
\begin{align*}
\frac{-1}{p-k-1} \binom{p-k-1}{j}
&=\frac{1}{k+1} \frac{(p-k-1)\cdots(p-k-j)}{j!} \\
&\equiv\frac{(-1)^{j}}{k+1} \frac{(k+1)\cdots(k+j)}{j!}
\equiv\frac{(-1)^{j}}{k+j+1} \binom{k+j+1}{j} \pmod{p},
\end{align*}
so that \eqref{equ:FMTVbfs1k} implies \eqref{equ:keyReduction} immediately and the lemma follows.
\end{proof}

\begin{lem}\label{lem:S(x)}
For any index $\bfs=(s_1,\dots,s_d)\in\N^d$ and $k\in\N$, define $\bfs+k=(s_1,\dots,s_{d-1},s_d+k).$
Then, for any prime $p> k+1$, we have
\begin{align}\label{equ:T2S}
T_\calA(\bfs,k+1)_p\equiv \sum_{j=0}^{p-k-2} \binom{p-k-1}{j}2^jB_j\Big(\frac12\Big)\frac{S_\calA(\bfs+k+j)_p}{p-k-1}.
\end{align}
\end{lem}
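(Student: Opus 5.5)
The plan is to mimic the proof of Lemma~\ref{lem:F(x)}, now with the roles of $\om_0$ and $\om_{-1}$ at the tail reversed. Consider the generating function
\begin{equation*}
G(x):=\sum_{\substack{n_1>\cdots>n_{d+1}>0\\ n_j\equiv d+2-j\ppmod{2}}}\frac{2^{d+1}x^{n_1}}{n_1^{s_1}\cdots n_d^{s_d}n_{d+1}^{k+1}}
=\int_0^x \om_0^{s_1-1}\om_{-1}\cdots\om_0^{s_d-1}\om_{-1}\om_0^{k}\om_{-1},
\end{equation*}
which holds by Lemma~\ref{lem:T(x)_integral}. The sum of its coefficients of $x^n$ for $0<n<p$ gives $T_\calA(\bfs,k+1)_p$.

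The innermost block $\om_0^k\om_{-1}$ integrates to a simple function of the endpoint. Apply the duality substitution $t\to (1-t)/(1+t)$, which sends $\om_0\leftrightarrow -\om_{-1}$. This turns the tail $\om_0^k\om_{-1}$ into $\om_0\,\om_{-1}^k$ near the transformed endpoint, and the factor $\int\om_{-1}^k$ becomes $\frac{(-1)^k}{k!}\log^k t$, exactly as in the proof of Lemma~\ref{lem:F(x)}. Next substitute $t\to e^t$. The form $\frac{2\,dt}{1-t^2}\log^{k}t$ becomes $\frac{2t^k e^t}{1-e^{2t}}dt$, which we expand using the Bernoulli generating function $\frac{2e^t}{1-e^{2t}}=-\sum_j 2^jB_j(\tfrac12)t^{j-1}/j!$. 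This produces the terms $t^{j+k-1}dt$, and shuffling them gives the factor $(j+k)!/j!$ times an iterated integral with $(dt)^{j+k}$.

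Now reverse the path and undo the substitutions. The resulting words in the original variable should be $\om_0^{s_1-1}\om_{-1}\cdots\om_0^{s_d+k+j-1}\cdot(\text{last letter})$. The last letter is the one to watch. Because the last $\om_{-1}$ of $\bfs$ is now followed by $\om_0$'s merging into the final block, the parity pattern shifts by one, and the tail should become the $S$-type form $\om_1$ (or equivalently the depth drops by one with opposite parity). I expect to get
\begin{equation*}
G(x)=\sum_{j\ge0} c_{k,j}\,\sum_{\substack{n_1>\cdots>n_d>0\\ n_j\equiv d-j\ppmod 2}}\frac{2^d x^{n_1}}{n_1^{s_1}\cdots n_d^{s_d+k+j}},
\qquad c_{k,j}=\frac{2^jB_j(\frac12)}{k+j+1}\binom{k+j+1}{j}\cdot(\pm1),
\end{equation*}
with the signs resolved by $B_j(\tfrac12)=0$ for odd $j$ and by the usual cancellation of signs under path reversal. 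Tracking this parity shift carefully is the main obstacle. I would check it first in the case $d=1$, $k=0$, verifying directly that the identity reproduces a relation between $T(s,1)$-type and $S(s)$-type series. Convergence holds for $|x|<1-1/e$, as in Lemma~\ref{lem:F(x)}.

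Finally, take the sum of coefficients of $x^n$ for $0<n<p$. The inner series with exponent $s_d+k+j$ contributes $S_\calA(\bfs+k+j)_p$. Terms with $j\ge p-k-1$ either vanish or have non-$p$-integral Bernoulli coefficients, so I truncate at $j\le p-k-2$. This matches the range in \eqref{equ:T2S}. I would justify the truncation by the same lowest-power argument as in \eqref{equ:FMTVbfs1k}, and check whether the $j=p-k-1$ term with $B_{p-1}$ needs separate handling. Then the congruence
\begin{equation*}
\frac{-1}{p-k-1}\binom{p-k-1}{j}\equiv\frac{(-1)^j}{k+j+1}\binom{k+j+1}{j}\pmod p,
\end{equation*}
already proved in Lemma~\ref{lem:F(x)}, converts the coefficients into the stated form. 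Here the sign is absorbed because only even $j$ contribute. This yields \eqref{equ:T2S}.
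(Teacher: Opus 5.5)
Your plan has a genuine gap: the generating-function identity you intend to derive does not exist, so there is nothing to take coefficients of. Test it in the case you proposed, $d=1$. For even $n$, the coefficient of $x^n$ in $G(x)$ is $\frac{4}{n^{s}}\sum_{0<m<n,\ m\text{ odd}} m^{-k-1}$. Your right-hand side would instead give $\frac{2}{n^{s}}\sum_{j\ge0}c_{k,j}\,n^{-k-j}$.

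The depth of $S(\bfs+k+j)$ does not grow with $j$, so every $j$ contributes to every coefficient. Since $|2^jB_j(\tfrac12)|\sim 2\,j!/\pi^j$, this series in $j$ diverges for every $n$. Moreover, as $n\to\infty$ the left side behaves like a nonzero constant, $\sum_{m<n,\,m\text{ odd}}m^{-k-1}\to(1-2^{-k-1})\zeta(k+1)$, while every term $n^{-k-j}$ on the right tends to $0$. What you are reaching for is only the divergent Euler--Maclaurin asymptotic expansion of the odd partial sum, not an identity.

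The same observation breaks your truncation step. In Lemma~\ref{lem:F(x)} the cutoff comes from $F_{j+k}(\bfs-1;x)=O(x^{j+k+d})$, i.e.\ from the depth increasing with $j$, and that has no analogue here. Also, ``non-$p$-integral coefficients'' is not a reason to discard terms.

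The analytic mechanism already fails at the first step. In Lemma~\ref{lem:F(x)} the block $\om_{-1}^{k+1}$ sits at the endpoint $0$ and integrates to $\frac{(-1)^{k+1}}{(k+1)!}\log^{k+1}\frac{1-t}{1+t}$. Your bottom block $\om_0^k\om_{-1}$ instead integrates to $2\sum_{m\text{ odd}}t^m/m^{k+1}$, a Legendre chi function, not a simple function. After duality the word reads $\om_0\om_{-1}^k\om_0\om_{-1}^{s_d-1}\cdots$, so $\om_{-1}^k$ is sandwiched between two $\om_0$'s and cannot be replaced by $\log^k$.

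Relatedly, duality, $t\to e^t$ and path reversal only ever produce the letters $\om_0$ and $\om_{-1}$. They never produce the terminal $\om_1$ that an $S$-value requires, so the ``parity shift'' you are hoping for has no source.

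The missing idea is that the lemma is genuinely a mod-$p$ statement and should be proved on the truncated sums directly:
\begin{itemize}
\item In $T_\calA(\bfs,k+1)_p$, isolate the innermost index $r_{d+1}$, which is odd with $r_{d+1}<r_d$ and $r_d$ even.
\item Replace $r_{d+1}^{-k-1}$ by $r_{d+1}^{p-k-2}$ using Fermat's little theorem.
\item Apply the exact Seki--Bernoulli polynomial identity, valid for even $r$ with $N=p-k-1$: $2\sum_{m\le r,\ m\text{ odd}}m^{N-1}=\frac1N\sum_{j=0}^{N-1}\binom Nj 2^jB_j(\tfrac12)r^{N-j}$.
\item Use $r_d^{N-j-s_d}\equiv r_d^{-(s_d+k+j)}\pmod p$. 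The remaining indices $r_1>\cdots>r_d$ then carry exactly the $S$-parity pattern.
\end{itemize}
The range $j\le p-k-2$ is then automatic from the degree of the polynomial, with no analytic truncation argument needed.
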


\begin{proof}
By the Seki-Bernoulli formula,
\begin{equation*}
S_k(r):=\sum_{j=1}^{r}j^k=\frac{1}{k+1}\sum_{j=0}^{k}(-1)^j\binom{k+1}{j}B_j r^{k+1-j}.
\end{equation*}
For all even $r\in\N$, noticing that $p>k+1$, we get
\begin{align*}
\sum_{m=1}^{r} \frac{1-(-1)^m}{m^{k+1}}&\equiv\sum_{m=1}^{r}(1-(-1)^m)m^{p-k-2}\equiv2\sum_{1\le m\le r, m \text{ odd}}  m^{p-k-2} \pmod{p}\\
&\equiv 2S_{p-k-2}(r)-2^{p-k-1}S_{p-k-2}\left( \frac{r}{2} \right)\\
&\equiv \frac{2}{p-k-1}\sum_{j=0}^{p-k-2}(-1)^j \binom{p-k-1}{j}\left(r^{p-k-1-j}-2^{p-k-2}\left(\frac{r}{2}\right)^{p-k-1-j}\right)B_{j}\\
&\equiv \frac{1}{p-k-1}\sum_{j=0}^{p-k-2} \binom{p-k-1}{j}2^jB_j\Big(\frac12\Big)r^{p-k-1-j}
\end{align*}
since $B_j(1/2)=(2^{1-j}-1)B_{j}=0$ for all odd $j$. Hence, modulo $p$
\begin{align*}
T_\calA& (\bfs,k+1)_p\equiv\sum_{\substack{p>r_1>\cdots >r_{d+1}>0 \\ r_h\equiv d-h\pmod 2}} \frac{2^{d+1}}{r_1^{s_1}\cdots r_d^{s_d} r_{d+1}^{k+1}} \notag \\
&\equiv\sum_{\substack{p>r_1>\cdots >r_d>0\\ r_h\equiv d-h\pmod 2}} \frac{2^d}{r_1^{s_1}\cdots r_d^{s_d}}\sum_{r_{d+1}=1}^{r_d}\frac{1-(-1)^{r_{d+1}}}{r_{d+1}^{k+1}}\notag\\
&\equiv\frac{1}{p-k-1}\sum_{\substack{p>r_1>\cdots >r_d>0\\r_h\equiv d-h\pmod 2}}\frac{2^d}{r_1^{s_1}\cdots r_d^{s_d}}\sum_{j=0}^{p-k-2} \binom{p-k-1}{j}2^jB_j\Big(\frac12\Big)r_d^{p-k-1-j}\notag\\
&\equiv \frac{1}{p-k-1}\sum_{j=0}^{p-k-2} \binom{p-k-1}{j}2^jB_j\Big(\frac12\Big)\sum_{\substack{p>r_1>\cdots >r_d>0\\r_h\equiv d-h\pmod 2}}\frac{2^d}{r_1^{s_1}\cdots r_d^{s_d+k+j}}\notag\\
&\equiv \frac{1}{p-k-1}\sum_{j=0}^{p-k-2} \binom{p-k-1}{j}2^jB_j\Big(\frac12\Big)S_\calA(\bfs+k+j)_p.
\end{align*}
This completes the proof of the lemma.
\end{proof}

\begin{lem}\label{lem:S2(x)}
For any index $\bfs=(s_1,\dots,s_d)\in\N^d$ and $k\in\N$, define $\bfs+k=(s_1,\dots,s_{d-1},s_d+k).$
Then, for any prime $p>k+1$, we have
\begin{equation*}
S_\calA(\bfs,k+1)_p\equiv -T_\calA(k+1)_p \cdot T_\calA(\bfs)_p
+\sum_{j=0}^{p-k-2}\binom{p-k-1}{j} 2^jB_j\Big(\frac12\Big)\frac{T_{\calA}(\bfs+k+j)_p }{p-k-1}\pmod{p}.
\end{equation*}
\end{lem}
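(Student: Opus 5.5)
The plan is to mimic the proof of Lemma~\ref{lem:S(x)}, with one difference. In $S_\calA(\bfs,k+1)_p$ the last summation index $r_{d+1}$ has parity pattern opposite to the $T$ case: $r_{d+1}$ is even, and the preceding indices $r_1,\dots,r_d$ follow the $T$-pattern of depth $d$, i.e.\ $r_h\equiv d-h+1\pmod 2$. We therefore write
\begin{equation*}
S_\calA(\bfs,k+1)_p\equiv\sum_{\substack{p>r_1>\cdots>r_d>0\\ r_h\equiv d-h+1\pmod 2}}\frac{2^d}{r_1^{s_1}\cdots r_d^{s_d}}\sum_{r_{d+1}=1}^{r_d-1}\frac{1+(-1)^{r_{d+1}}}{r_{d+1}^{k+1}} \pmod{p}.
\end{equation*}
Here $r_d$ is odd, so the inner sum runs over $1\le m\le r_d-1$ with $r_d-1$ even. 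The natural move is to write $1+(-1)^m=2-(1-(-1)^m)$. This splits the inner sum into $2\sum_{m=1}^{r_d-1}m^{-(k+1)}$ minus the odd-part sum already evaluated in the proof of Lemma~\ref{lem:S(x)} (valid for even upper limit $r=r_d-1$).

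Next I expand both pieces as polynomials in $r_d$, using $m^{-(k+1)}\equiv m^{p-k-2}$ and the Seki--Bernoulli formula. The odd-part sum up to $r_d-1$ equals the odd-part sum up to $r_d$ minus $r_d^{p-k-2}$. The odd-part sum up to the even number $r_d+1$ equals the same thing, and the formula from Lemma~\ref{lem:S(x)} gives it as $\frac{1}{p-k-1}\sum_j\binom{p-k-1}{j}2^jB_j(\tfrac12)(r_d+1)^{p-k-1-j}$.

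A cleaner route is to split $2\sum_{m<r_d}m^{-(k+1)}$ and the odd part directly into an ``even $m$'' sum. We have
\begin{equation*}
\sum_{\substack{m<r_d\\ m\text{ even}}}m^{p-k-2}=2^{p-k-2}S_{p-k-2}\Big(\frac{r_d-1}{2}\Big).
\end{equation*}
Then we use the reflection $m\mapsto p-m$, since $p-r_d$ is even. This sends even $m<r_d$ to odd $m'>p-r_d$, i.e.\ odd $m'$ in the range $(p-r_d,p)$. The total sum over all odd $m'<p$ is $\tfrac12 T_\calA(k+1)_p\cdot(-1)^{k+1}$, up to the $2$-normalization. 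The odd $m'\le p-r_d$ are handled by the formula of Lemma~\ref{lem:S(x)} with even argument $r=p-r_d\equiv -r_d$.

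Since only even $j$ occur (as $B_j(1/2)=0$ for odd $j$), the signs $(-r_d)^{p-k-1-j}$ combine with $(-1)^{k+1}$ from the reflection into an overall consistent sign. The constant piece produces $-T_\calA(k+1)_p$ times the outer sum, and the outer sum is exactly $T_\calA(\bfs)_p$. The polynomial piece produces $\frac{1}{p-k-1}\sum_j\binom{p-k-1}{j}2^jB_j(\tfrac12)\,r_d^{-(k+j)}$ after $r_d^{p-1}\equiv1$. Summing this against the outer weights gives $T_\calA(\bfs+k+j)_p$, because the outer parity pattern is the $T$-pattern of depth $d$.

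The main obstacle is the sign and normalization bookkeeping in the reflection step. We must make sure the boundary term $m=r_d$ is excluded correctly, and that the factor $2^{d}$ versus $2^{d+1}$ and the factor $2$ in $1+(-1)^m$ combine to give exactly coefficient $-1$ on $T_\calA(k+1)_p\,T_\calA(\bfs)_p$. I would check that coefficient first on the case $d=1$ numerically or by hand.
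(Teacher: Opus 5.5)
Your reflection argument is correct, and it is a genuinely different route from the paper's. The paper applies the Seki--Bernoulli formula directly to the even part $2^{p-k-2}S_{p-k-2}\big(\frac{r_d-1}{2}\big)$. It then expands $\big(\frac{r_d-1}{2}\big)^{N-j}$ (with $N=p-k-1$) binomially and regroups via the addition formula into terms $B_j(\tfrac12)\,r_d^{N-j}$. The leftover constant $(-1)^k\big(B_N(\tfrac12)-B_N\big)$ is recognized as $T_\calA(k+1)_p$ only by importing the Bernoulli-number evaluation of Corollary~\ref{cor:unitT-oddWt}.

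Your substitution $m\mapsto p-m$ instead writes the even sum over $m<r_d$ as $(-1)^{k+1}$ times a difference of two odd sums. The first is the full odd sum, which is literally $\tfrac12T_\calA(k+1)_p$. The second is the odd sum up to the even integer $p-r_d$, which is the exact identity from the proof of Lemma~\ref{lem:S(x)} evaluated at $r=p-r_d\equiv -r_d$. This avoids both the re-expansion and any outside evaluation of $T_\calA(k+1)_p$, and it works prime by prime. The sign bookkeeping you worried about is straightforward. Only even $j$ survive, so $(-r_d)^{N-j}=(-1)^k r_d^{N-j}$, and $(-1)^{k+1}\cdot(-1)\cdot(-1)^k=1$ gives the Bernoulli sum with exactly the stated coefficients.

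The one thing you should not wave through is the constant term. Your computation gives $(-1)^{k+1}T_\calA(k+1)_p\,T_\calA(\bfs)_p$, not $-T_\calA(k+1)_p\,T_\calA(\bfs)_p$. For even $k$ these agree. For odd $k$ you need the extra fact $T_\calA(k+1)_p\equiv 0$. This follows from the same reflection: for even $k+1$ the odd and even halves of $\sum_{m=1}^{p-1}m^{-(k+1)}$ coincide, and the full sum vanishes when $p-1\nmid k+1$, i.e.\ for $p>k+2$.

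At $p=k+2$ it fails, since $T_\calA(p-1)_p\equiv -1$, and the lemma as stated fails too. For $p=3$, $k=1$, $\bfs=(1)$, the left side is $S_\calA(1,2)_3=0$. The stated right side is $-T_\calA(2)_3T_\calA(1)_3+T_\calA(2)_3=-2\not\equiv 0\pmod 3$, whereas coefficient $+1$ would give $6\equiv 0$. So state the result with coefficient $(-1)^{k+1}$, or restrict to $p>k+2$ when $k$ is odd. The paper's proof has the same slip in ``this term is 0 when $k$ is odd''; it is invisible in $\calA$.
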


\begin{proof}
Set $N=p-k-1\ge 1$ for simplicity. Then
\begin{align*}
\ S_\calA(\mathbf s,k+1)_p&
\equiv\sum_{\substack{p>r_1>\cdots >r_{d+1}>0 \\ r_{j}\equiv d+1-j\pmod 2}} \frac{2^{d+1}}{r_1^{s_1}\cdots r_{d}^{s_d}r_{d+1}^{k+1}} \\
&\equiv\sum_{\substack{p>r_1>\cdots >r_{d}>0\\r_j\equiv d+1-j\pmod 2}} \frac{2^{d+1}}{r_1^{s_1}\cdots r_{d}^{s_d}}\sum_{1\le r_{d+1}'\le (r_{d}-1)/2}\frac{1}{(2r_{d+1}')^{k+1}} \qquad(\text{since $r_{d}$ is odd}) \\
&\equiv\sum_{\substack{p>r_1>\cdots >r_{d}>0\\r_j\equiv d+1-j\pmod 2}} \frac{2^{d+1}}{r_1^{s_1}\cdots r_{d}^{s_d}} \cdot\frac{2^{-k-1}}{N}\sum_{j=0}^{N-1}(-1)^j\binom{N}{j}B_j\left(\frac{r_{d}-1}{2}\right)^{N-j}.
\end{align*}
Expanding the inner sum by adding and subtracting the extra term corresponding to $j=N$, we have
\begin{align*}
&\,\sum_{j=0}^{N}(-1)^j\binom{N}{j}B_j\left(\frac{r_{d}-1}{2}\right)^{N-j}-(-1)^N B_N\\
=&\,
\sum_{j=0}^{N}(-1)^j\binom{N}{j}B_j2^{j-N}\sum_{i=0}^{N-j}\binom{N-j}{i}(-1)^{N-j-i}r_{d}^{i}-(-1)^N B_N\\
=&\, \sum_{0\le i+j\le N} (-1)^{k+i}\binom{N}{j}\binom{N-j}{i}B_j2^{j-N}r_{d}^{i} -(-1)^N B_N\\
=&\, \sum_{0\le i+j\le N} (-1)^{k+i}\binom{N}{i}\binom{N-i}{j}B_j2^{j-N}r_{d}^{i} -(-1)^N B_N\\
=&\, \sum_{j=0}^{N}2^{-j}\Bigg\{\sum_{i=0}^{N-j}\binom{N-j}{i}B_i2^{i-(N-j)}\Bigg\}(-1)^{k+j}\binom{N}{j}r_{d}^{j}-(-1)^N B_N \quad(\text{swapping $i$ and $j$})\\
=&\, \sum_{j=0}^{N}2^{-j}B_{N-j}\Big(\frac12\Big)(-1)^{k+j}\binom{N}{j}r_{d}^{j}-(-1)^N B_N\\
=&\, \sum_{j=0}^{N}2^{-(N-j)}B_{j}\Big(\frac12\Big)(-1)^{k+N-j}\binom{N}{j}r_{d}^{N-j}-(-1)^N B_N \\
=&\, \sum_{j=0}^{N-1}2^{-(N-j)}B_{j}\Big(\frac12\Big)(-1)^{k+N-j}\binom{N}{j}r_{d}^{N-j}
+(-1)^k\Big(B_{N}\Big(\frac12\Big)- B_N\Big) .
\end{align*}
From this we have
\begin{align*}
S_\calA(\bfs,k+1)_p\equiv&\,
\frac{2^{-k}}{p-k-1}(-1)^k(2^{1+k}-2)B_{p-k-1}\sum_{\substack{p>r_1>\cdots >r_{d}>0\\r_j\equiv d+1-j\pmod 2}}\frac{2^{d}}{r_1^{s_1}\cdots r_d^{s_{d}}} \\ &\,+\frac{2^{-k}}{p-k-1}\sum_{j=0}^{p-k-2}2^{k+j}B_j\left(\frac12\right)\binom{p-k-1}{j}
\sum_{\substack{p>r_1>\cdots >r_{d}>0\\r_j\equiv d+1-j\pmod 2}}\frac{2^{d}}{r_1^{s_1}\cdots r_{d-1}^{s_{d-1}}}r_d^{p-k-1-j-s_{d}}\\
&\equiv (-1)^{k-1}T_\calA(k+1)_pT_\calA(\bfs)_p
+\frac{1}{p-k-1}\sum_{j=0}^{p-k-2}2^jB_j\left(\frac12\right)\binom{p-k-1}{j} T_{\calA}(\mathbf s+k+j)_p,
\end{align*}
by Corollary~\ref{cor:unitT-oddWt}. We can even replace $(-1)^{k-1}$ by $-1$ since this term is 0 when $k$ is odd.
\end{proof}

\begin{thm} \label{thm:v-DualReversal}
For all positive integers $n$ and $k$, we have
\begin{align}\label{equ:v-DualRev_n1k}
&T_\calA(n,\{1\}^{k})=T_\calA(k+1,\{1\}^{n-1}).
\end{align}
In particular,
\begin{align} \label{equ:v-DualRevEven-n1k}
T_\calA(2n,\{1\}^{k})=&\, (-1)^{k}T_\calA(\{1\}^{2n-1},k+1),\\
T_\calA(2n+1,\{1\}^k)=&\, (-1)^{k+1}S_\calA(\{1\}^{2n},k+1)  \label{equ:v-DualRevOdd-n1k}.
\end{align}
\end{thm}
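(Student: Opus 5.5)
Fix a prime $p$ large compared with $n+k$; we work with the $p$-components throughout. The plan is to prove \eqref{equ:v-DualRev_n1k} by induction on $n$, using Lemma~\ref{lem:F(x)} as a recursion that lowers the first component. The point is that this recursion has exactly the same shape on both sides of the identity.

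For $n=1$ the identity reads $T_\calA(\{1\}^{k+1})=T_\calA(k+1)$, which is Corollary~\ref{cor:unitT-oddWt}. For $n\ge2$, apply Lemma~\ref{lem:F(x)} with $\bfs=(n)$:
\begin{equation*}
T_\calA(n,\{1\}^k)_p\equiv -\sum_{j} \binom{p-k-1}{j}2^jB_j\Big(\frac12\Big)\frac{T_\calA(n-1,\{1\}^{k+j})_p}{p-k-1}.
\end{equation*}
By induction, $T_\calA(n-1,\{1\}^{k+j})=T_\calA(k+j+1,\{1\}^{n-2})$.

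We then want to show that the right-hand side equals $T_\calA(k+1,\{1\}^{n-1})_p$. For this, rewrite $T_\calA(k+1,\{1\}^{n-1})$ by expanding the tail of ones rather than the head. The cleanest route is to apply the same generating-function computation as in Lemma~\ref{lem:F(x)}, but to the dual side. Under $t\to(1-t)/(1+t)$ the word $\om_0^{n-1}\om_{-1}^{k+1}$ becomes $\om_0^{k}\om_{-1}^{n}$ read backwards. The Bernoulli-polynomial expansion of $\frac{2t^{m}e^t}{1-e^{2t}}$ then peels off the block $\om_0^{k}$ in the same way that it peels off $\om_{-1}^{k+1}$ on the original side.

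Concretely, I would establish a companion reduction to Lemma~\ref{lem:F(x)}: for any $m\ge1$, $T_\calA(m+1,\{1\}^{r})_p$ equals the same Bernoulli combination of $T_\calA(m+1+j,\{1\}^{r-1})_p$. Running the proof of Lemma~\ref{lem:F(x)} with the roles of the two endpoint blocks exchanged gives this, because the integrand at the path's end is $\om_0^{j+k+1}$ after path reversal.

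If this companion reduction is awkward, an alternative is to induct on both $n$ and $k$ via the symmetry $(n,k)\leftrightarrow(k+1,n-1)$. Both sides of \eqref{equ:v-DualRev_n1k} would then reduce to the same symmetric expression in the Bernoulli coefficients $c_j=\binom{p-k-1}{j}2^jB_j(1/2)/(p-k-1)$, and the base cases would be covered by Corollary~\ref{cor:unitT-oddWt} and the $T_\calA(2,\{1\}^k)$ evaluation. I expect this matching of the two recursions, with the index $p-k-1$ versus $p-n$ in the binomials and the truncation of the $j$-range, to be the main obstacle. I would handle it by working with the untruncated generating function $F_k(\bfs;x)$ and extracting coefficients only at the end, as in the proof of Lemma~\ref{lem:F(x)}.

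For the particular cases, apply Lemma~\ref{lem:T2S} and Lemma~\ref{lem:S2(x)} with $\bfs=(\{1\}^{m})$ and $m=n-1$ to the right-hand sides $T_\calA(\{1\}^{2n-1},k+1)$ and $S_\calA(\{1\}^{2n},k+1)$. Here $\bfs+k+j=(\{1\}^{m-1},k+j+1)$, and the result is expressed in the same Bernoulli combinations of $S_\calA$ or $T_\calA$ of $(\{1\}^{m-1},k+j+1)$. The extra product term in Lemma~\ref{lem:S2(x)} is handled by Theorem~\ref{thm:unitTA}: it vanishes when $k$ is odd, and otherwise cancels by parity of the total weight.

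Reversal is then handled by the standard identity $T_\calA(\bfs)=(-1)^{|\bfs|}T_\calA$ or $S_\calA(\overleftarrow{\bfs})$, obtained from $n\to p-n$; it gives $T$ or $S$ according to the parity of the depth, which is why \eqref{equ:v-DualRevEven-n1k} involves $T_\calA$ and \eqref{equ:v-DualRevOdd-n1k} involves $S_\calA$. Combined with \eqref{equ:v-DualRev_n1k}, this reversal turns $T_\calA(k+1,\{1\}^{n-1})$ into $\pm T_\calA(\{1\}^{n-1},k+1)$ or $\pm S_\calA(\{1\}^{n-1},k+1)$. After the parity bookkeeping on $w=n+k$, this gives the two displayed forms.
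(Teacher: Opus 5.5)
Your skeleton (induction on $n$, Lemma~\ref{lem:F(x)} applied to $T_\calA(n,\{1\}^k)_p$, then the inductive hypothesis) matches the paper's. The companion reduction you isolate,
\[
T_\calA(k+1,\{1\}^{n-1})_p\equiv-\sum_{j=0}^{p-k-2}\binom{p-k-1}{j}2^jB_j\Big(\frac12\Big)\frac{T_\calA(k+j+1,\{1\}^{n-2})_p}{p-k-1},
\]
is exactly what closes the induction. The gap is that your justification for it does not go through. Lemma~\ref{lem:F(x)} peels the \emph{innermost} block of the word, the one integrated from $0$. It integrates $\om_{-1}^{k+1}$ into a function of $t$ alone, dualizes only that inner kernel, and dualizes back, so it ends with $\int_0^x$ again, whose Taylor coefficients are the truncated sums. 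In $\om_0^k\om_{-1}^n$ the block $\om_0^k$ sits at the variable endpoint $x$. Integrating it out gives $\frac{1}{k!}\log^k(x/t)$, which mixes in $x$. Dualizing the whole generating function gives $\int_{(1-x)/(1+x)}^1$, whose coefficients are not finite MTVs. So ``exchanging the roles of the two blocks'' amounts to the duality $t\to(1-t)/(1+t)$, which finite MTVs do not satisfy; that failure is the whole content of the theorem. Your fallback of expanding both sides by Lemma~\ref{lem:F(x)} stops exactly where the $p-k-1$ and $p-n$ binomials must be reconciled, so it is not an argument either.

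The missing ingredient is the reversal $n_j\to p-n_j$, a purely finite symmetry that turns $T$ into $S$ in odd depth. It is followed by Seki--Bernoulli summation over the last variable, which is the content of Lemmas~\ref{lem:S(x)} and~\ref{lem:S2(x)}.
\begin{itemize}
\item For $n$ even: $T_\calA(k+1,\{1\}^{n-1})=(-1)^kT_\calA(\{1\}^{n-1},k+1)$. Lemma~\ref{lem:S(x)} writes this as the Bernoulli combination of $S_\calA(\{1\}^{n-2},k+j+1)$. Reversing these (depth $n-1$ is odd) gives $(-1)^{k+1}T_\calA(k+j+1,\{1\}^{n-2})$, which is precisely your companion identity.
\item For $n$ odd: reverse to $S_\calA(\{1\}^{n-1},k+1)$ and apply Lemma~\ref{lem:S2(x)}. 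Its extra term $-T_\calA(k+1)_pT_\calA(\{1\}^{n-1})_p$ vanishes because $n-1$ is even.
\end{itemize}
You do invoke these lemmas, but only for the two ``particular cases'', which in fact follow from \eqref{equ:v-DualRev_n1k} by reversal alone.

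Separately, your base case is under-specified. Since $j$ runs up to $p-k-2$, the inductive hypothesis is used for $T_\calA(n-1,\{1\}^{k+j})_p$ with depth up to $p-1$. So ``$p$ large compared with $n+k$'' together with the $\calA$-identity of Corollary~\ref{cor:unitT-oddWt} does not suffice. You must fix $p$ and prove the $p$-component identity for all $n<p$ and $k+1<p$, starting from Remark~\ref{rem:unitT-oddWt}. That start includes the boundary case $T_\calA(\{1\}^{p-1})_p\equiv T_\calA(p-1)_p\equiv-1$. This term is essential: for $n=2$ and odd $k$ it is the only nonzero term in Lemma~\ref{lem:F(x)}, and it produces the entire value $(k+2)(2-2^{-1-k})\beta_{k+2}$.
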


\begin{proof}
It is clear that both \eqref{equ:v-DualRevEven-n1k} and \eqref{equ:v-DualRevOdd-n1k} follow from \eqref{thm:v-DualReversal} by the reversal relations. In what follows, we will prove \eqref{equ:v-DualRev_n1k} holds for each prime $p$-component ($p>n$ and $p>k+1$), i.e.,
\begin{align}\label{equ:v-DualRev_n1kp}
&T_\calA(n,\{1\}^{k})_p=T_\calA(k+1,\{1\}^{n-1})_p,
\end{align}
by induction on $n$, noting that the base case of $n=1$ is given by Remark~\ref{rem:unitT-oddWt}.

Now we assume $n\ge 2$ and $T_\calA(n-1,\{1\}^{k})_p=T_\calA(k+1,\{1\}^{n-2})_p$ for all $k\in\N$.

\medskip
\noindent
\textbf{Case (i). $n$ is even.} Then Lemma~\ref{lem:S(x)} and the reversal relation of $S_\calA(\{1\}^{n-2},k+j+1)$ yield that
\begin{align*}
T_\calA (\{1\}^{n-1},k+1)_p&\equiv \frac{-1}{p-k-1}\sum_{j=0}^{p-k-2} \binom{p-k-1}{j}2^jB_j\Big(\frac12\Big)S_\calA(\{1\}^{n-2},k+j+1)_p\\
&\equiv \frac{(-1)^{k}}{p-k-1}\sum_{j=0}^{p-k-2} \binom{p-k-1}{j}2^jB_j\Big(\frac12\Big)T_\calA(k+j+1,\{1\}^{n-2})_p\\
&\equiv \frac{(-1)^{k}}{p-k-1}\sum_{j=0}^{p-k-2} \binom{p-k-1}{j}2^jB_j\Big(\frac12\Big)T_\calA(n-1,\{1\}^{k+j})_p
\end{align*}
by induction assumption. By Lemma~\ref{lem:F(x)}, we see that
\begin{align*}
&T_\calA(\{1\}^{n-1},k+1)_p \equiv (-1)^{k} T_\calA(n,\{1\}^{k})_p
\end{align*}
which gives the even $n$ case of \eqref{equ:v-DualRev_n1kp}.

\medskip
\noindent
\textbf{Case (ii). $n$ is odd.}  Since $T_\calA(\{1\}^{n-1})_p=0$ by Theorem~\ref{thm:unitTA}, Lemma~\ref{lem:S2(x)} yields that
\begin{align*}
S_\calA(\{1\}^{n-1},k+1)_p
&\equiv\frac{1}{p-k-1}\sum_{j=0}^{p-k-2}2^{j}B_{j}\Big(\frac12\Big)(-1)^{j}\binom{p-k-1}{j}T_\calA(\{1\}^{n-2},k+j+1)_p \\
&\equiv\frac{1}{p-k-1}\sum_{j=0}^{p-k-2}2^{j}B_{j}\Big(\frac12\Big)(-1)^{j}\binom{p-k-1}{j}T_\calA(\{1\}^{k+j},n-1)_p \\
&\equiv\frac{(-1)^{k}}{p-k-1}\sum_{j=0}^{p-k-2}2^{j}B_{j}\Big(\frac12\Big)\binom{p-k-1}{j} T_\calA(n-1,\{1\}^{k+j})_p
\end{align*}
by the inductive hypothesis and then the reversal relation with $n-1$ is even. Applying \eqref{equ:keyReduction} gives rise to
\begin{align*}
S_\calA(\{1\}^{n-1},k+1)_p & \equiv (-1)^{k+1} T_\calA(n,\{1\}^{k})_p.
\end{align*}
Since $n$ is odd, applying the reversal relation on the left-hand side we get
\begin{align*}
T_\calA(k+1,\{1\}^{n-1})_p & \equiv T_\calA(n,\{1\}^{k})_p.
\end{align*}
This confirms \eqref{equ:v-DualRev_n1kp} when $n$ is odd.

\medskip
Combining Cases (i) and (ii) we can complete the proof of \eqref{equ:v-DualRev_n1kp} hence the theorem.
\end{proof}

The following corollary confirms the first relation in Conjecture~\ref{conj:T21k}. Using the value $T_\calA(k+2)=2(1-2^{-1-k}) \beta_{k+2}$ (see \cite[(37)]{Zhao2024a}) we can then find the finite analog of Corollary \ref{cor:SMTV21k}.

\begin{cor}\label{cor:TA21_k}
For all positive integers $k$, we have
\begin{equation}\label{equ:FMTV21kGeneral}
T_{\calA}(2,\{1\}^k)=(-1)^k T_{\calA}(1,k+1).
\end{equation}
In particular, if $k$ is odd then
\begin{equation}\label{equ:FMTV21k}
T_{\calA}(2,\{1\}^k)=-T_{\calA}(1,k+1)=(k+2)(2-2^{-1-k})\beta_{k+2} =\frac{(k+2)(2^{k+2}-1)}{2(2^{k+1}-1)}T_\calA(k+2).
\end{equation}
More generally (but in fact equivalently), for all $0\le\ell\le k$ with odd $k$, we have
\begin{equation}\label{equ:TA1ell21k}
T_\calA(\{1\}^\ell,2,\{1\}^{k-\ell})=(-1)^{\ell}\binom{k+2}{\ell+1} (2-2^{-1-k}) \beta_{k+2}.
\end{equation}
\end{cor}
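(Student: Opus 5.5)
The general identity \eqref{equ:FMTV21kGeneral} is a special case of Theorem~\ref{thm:v-DualReversal}. Take $n=2$ in \eqref{equ:v-DualRevEven-n1k} to get
\begin{equation*}
T_\calA(2,\{1\}^k)=(-1)^kT_\calA(\{1\}^{1},k+1)=(-1)^kT_\calA(1,k+1).
\end{equation*}
Nothing else is needed beyond checking that the $p$-component statement \eqref{equ:v-DualRev_n1kp} holds for all sufficiently large $p$ (here $p>k+1$). This is enough, because two elements of $\calA$ agree if their $p$-components agree for almost all $p$.

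Now let $k$ be odd. For \eqref{equ:FMTV21k} I would not compute $T_\calA(1,k+1)$ directly. Instead I would use Theorem~\ref{thm:dblTs} with $(a,b)=(1,k+1)$, since $w=k+2$ is odd. It gives
\begin{equation*}
T_\calA(1,k+1)=(-1)^1\binom{k+2}{1}(2-2^{1-(k+2)})\beta_{k+2}=-(k+2)(2-2^{-1-k})\beta_{k+2}.
\end{equation*}
Combining this with $T_\calA(2,\{1\}^k)=-T_\calA(1,k+1)$ from the first part gives the middle expression. For the last expression I would substitute $T_\calA(k+2)=2(1-2^{-1-k})\beta_{k+2}$ from \cite[(37)]{Zhao2024a}, equivalently Corollary~\ref{cor:unitT-oddWt}, and observe
\begin{equation*}
\frac{2-2^{-1-k}}{2(1-2^{-1-k})}=\frac{2^{k+2}-1}{2(2^{k+1}-1)}.
\end{equation*}

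Finally, \eqref{equ:TA1ell21k} follows from Proposition~\ref{prop:FMT1ell2lk} together with \eqref{equ:FMTV21k}, using the binomial identity
\begin{equation*}
\frac{1}{\ell+1}\binom{k+1}{\ell}(k+2)=\binom{k+2}{\ell+1}.
\end{equation*}

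One point needs care. Theorem~\ref{thm:dblTs} itself quotes this corollary for the equality $T_\calA(a,b)=-T_\calA((a,b)^\vee)$. I would only use its first equality, the evaluation of $T_\calA(a,b)$ taken from \cite[(42)]{Zhao2024a}, so the argument is not circular. Apart from that, the only real content is in Theorem~\ref{thm:v-DualReversal}, which is already available; the rest is bookkeeping of the $2^d$ normalisation and the sign $(-1)^a$ with $a=1$.
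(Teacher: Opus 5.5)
Your proof is correct and uses the same ingredients as the paper: Theorem~\ref{thm:v-DualReversal} for \eqref{equ:FMTV21kGeneral}, the double-value evaluation of \cite[(42)]{Zhao2024a} (the first equality of Theorem~\ref{thm:dblTs}, which you rightly note is not circular) together with $T_\calA(k+2)=2(1-2^{-1-k})\beta_{k+2}$ for \eqref{equ:FMTV21k}, and Proposition~\ref{prop:FMT1ell2lk} for \eqref{equ:TA1ell21k}; your ordering also makes clearer than the paper's terse proof where the value $(k+2)(2-2^{-1-k})\beta_{k+2}$ comes from. One small slip: in \eqref{equ:v-DualRevEven-n1k} you need $n=1$ (so that $2n=2$), not $n=2$. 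Alternatively, take $n=2$ in \eqref{equ:v-DualRev_n1k} and then apply the depth-two reversal $T_\calA(k+1,1)=(-1)^{k}T_\calA(1,k+1)$.
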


\begin{proof}
Clearly, \eqref{equ:FMTV21kGeneral} is a special case of Theorem~\ref{thm:v-DualReversal}, which gives the first equality in \eqref{equ:FMTV21k}. When $k$ is odd, \eqref{equ:TA1ell21k} follows from Prop.~\ref{prop:FMT1ell2lk}, which implies the second equality
in \eqref{equ:FMTV21k}. The last equality of \eqref{equ:FMTV21k} follows from \cite[(42)]{Zhao2024a}.
\end{proof}

\begin{thm}\label{thm:ht2FMTVdual}
For all $n,k\in \mathbb N$ such that $n+k\equiv 1\pmod 2$, we have
\begin{equation*}
T_\calA(n,2,\{1\}^{k})=T_\calA(k+1,2,\{1\}^{n-1}).
\end{equation*}
Equivalently,
\begin{align}\label{equ:ht2FMTVdual-nOdd}
T_\calA(2n-1,2,\{1\}^{2k})=&\, -T_\calA(\{1\}^{2n-2},2,2k+1),\\
T_\calA(2n,2,\{1\}^{2k-1})=&\, -S_\calA(\{1\}^{2n-1},2,2k).\label{equ:ht2FMTVdual-nEven}
\end{align}
\end{thm}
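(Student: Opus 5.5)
We follow the template of the proof of Theorem~\ref{thm:v-DualReversal}, now with an extra $2$ in the index, and reduce everything to the already established relation \eqref{equ:v-DualRev_n1k}. Fix a prime $p$ larger than $n+k+3$; it suffices to prove
\begin{equation*}
T_\calA(n,2,\{1\}^{k})_p=T_\calA(k+1,2,\{1\}^{n-1})_p .
\end{equation*}
The key observation is that Lemma~\ref{lem:F(x)} applies to $\bfs=(n,2)$. Since $\bfs-1=(n,1)$, it gives
\begin{equation*}
T_\calA(n,2,\{1\}^k)_p\equiv -\sum_{j=0}^{p-k-2}\binom{p-k-1}{j}2^jB_j\Big(\frac12\Big)\frac{T_\calA(n,\{1\}^{k+j+1})_p}{p-k-1}\pmod p .
\end{equation*}
We then apply Theorem~\ref{thm:v-DualReversal} termwise, $T_\calA(n,\{1\}^{k+j+1})_p=T_\calA(k+j+2,\{1\}^{n-1})_p$. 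Only even $j$ contribute, since $B_j(1/2)=0$ for odd $j$. After using the reversal relation (with the $T\leftrightarrow S$ swap controlled by the parity of the depth $n$), these become values $T_\calA(\{1\}^{n-1},k+j+2)_p$ or $S_\calA(\{1\}^{n-1},k+j+2)_p$. This exhibits the left side as a Bernoulli-weighted sum of depth-$n$ values whose last component is $k+2+j$.

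On the other side, we reverse $T_\calA(k+1,2,\{1\}^{n-1})$ to $\pm T_\calA(\{1\}^{n-1},2,k+1)$ or $\pm S_\calA(\{1\}^{n-1},2,k+1)$; which one occurs depends on the parity of $n+1$, and since $n+k$ is odd this fixes the parity of $k$ as well. We then peel off the last component $k+1$. If the value is of type $T$ we use Lemma~\ref{lem:S(x)} with $\bfs=(\{1\}^{n-1},2)$. If it is of type $S$ we use Lemma~\ref{lem:S2(x)}, whose extra term $-T_\calA(k+1)_pT_\calA(\{1\}^{n-1},2)_p$ must be shown to vanish or cancel. In both cases $\bfs+k+j=(\{1\}^{n-1},k+j+2)$, so the right side also becomes
\begin{equation*}
\frac{1}{p-k-1}\sum_j\binom{p-k-1}{j}2^jB_j\Big(\frac12\Big)\,X_\calA(\{1\}^{n-1},k+j+2)_p ,
\end{equation*}
with $X\in\{S,T\}$. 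The two sides then match term by term, provided the types $S/T$ and the signs $(-1)^{\text{weight}}$ from the reversals agree. The hypothesis $n+k\equiv1\pmod 2$ should make the total weight $n+k+3$ even and align the types. We therefore split into the cases $n$ odd, $k$ even and $n$ even, $k$ odd, exactly as in Cases (i) and (ii) of Theorem~\ref{thm:v-DualReversal}.

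The main obstacle is the extra product term in Lemma~\ref{lem:S2(x)}. When $k+1$ is even, $T_\calA(k+1)_p=0$ by Corollary~\ref{cor:unitT-oddWt}, and the term disappears. In the other case I would first try to show $T_\calA(\{1\}^{n-1},2)_p=0$ or make it cancel. For $n-1$ odd it equals $\pm T_\calA(2,\{1\}^{n-1})$ by reversal, which Theorem~\ref{thm:v-DualReversal} relates to $T_\calA(n,1)$, and that vanishes by the evaluation in Theorem~\ref{thm:dblTs} when the weight $n+1$ is even. Failing that, I would choose which side to reduce so that only Lemma~\ref{lem:S(x)} is needed, carefully tracking the $(-1)^j$ signs, which vanish since only even $j$ contribute. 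The equivalent forms \eqref{equ:ht2FMTVdual-nOdd} and \eqref{equ:ht2FMTVdual-nEven} then follow by applying the reversal relation to the right-hand side, with the sign $(-1)^{\text{weight}}$ and the $T/S$ swap depending on the depth parity.
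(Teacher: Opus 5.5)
Your proposal is correct and is essentially the paper's proof: apply Lemma~\ref{lem:F(x)} with $\bfs=(n,2)$, rewrite each $T_\calA(n,\{1\}^{k+j+1})_p$ as a depth-$n$ value with last entry $k+j+2$ via Theorem~\ref{thm:v-DualReversal} and reversal, and reassemble the reversed right-hand side with Lemma~\ref{lem:S(x)} ($n$ odd) or Lemma~\ref{lem:S2(x)} ($n$ even). Your ``other case'' never arises, because the $S$-type case is exactly $n$ even, which forces $k$ odd and hence $T_\calA(k+1)_p=0$; and the signs match because the intermediate values $(k+j+2,\{1\}^{n-1})$ have even weight $n+k+j+1$ for even $j$, while reversing the depth-$(n+1)$ value of weight $n+k+2$ (odd, not $n+k+3$) contributes the $-1$ that cancels the leading minus sign in Lemma~\ref{lem:F(x)}.
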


\begin{proof}
We proceed according to the parity of $n$ so that we only need to prove \eqref{equ:ht2FMTVdual-nOdd} and  \eqref{equ:ht2FMTVdual-nEven} due to the reversal relations.

\medskip
\noindent
\textbf{Case (i). $n$ is odd.} Set $\bfs=(n,2)$. Then
$$T_\calA(\bfs-1,\{1\}^{k+j})=T_\calA(n,\{1\}^{k+j+1})=(-1)^{k+j}S_\calA(\{1\}^{n-1},k+j+2)$$
by Theorem~\ref{thm:v-DualReversal}. Noting that $k$ is even, we know that by Lemma~\ref{lem:F(x)}
\begin{align*}
T_\calA(n,2,\{1\}^{k})_p&\equiv\frac{-1}{p-k-1}\sum_{j=0}^{p-k-3}2^jB_j\Big(\frac12\Big) \binom{p-k-1}{j} T_\calA(n,\{1\}^{k+j+1})_p\\
&\equiv\frac{-1}{p-k-1}\sum_{j=0}^{p-k-3}2^jB_j\Big(\frac12\Big) \binom{p-k-1}{j}S_\calA(\{1\}^{n-1},k+j+2)_p   \pmod{p}.\\
&\equiv -T_\calA(\{1\}^{n-1},2,k+1)_p   \pmod{p}
\end{align*}
for all primes $p>k+1$ by Lemma~\ref{lem:S(x)}. This proves \eqref{equ:ht2FMTVdual-nOdd}.

\medskip
\noindent
\textbf{Case (ii). $n$ is even.} Since $k$ is odd, Lemma~\ref{lem:F(x)} implies that
\begin{align*}
T_\calA(n,2,\{1\}^{k})_p&\equiv\frac{1}{p-k-1}\sum_{j=0}^{p-k-3}2^jB_j\Big(\frac12\Big)\binom{p-k-1}{j}T_\calA(n,\{1\}^{k+j+1})_p\\
&\equiv\frac{-1}{p-k-1} \sum_{j=0}^{p-k-2}2^jB_{j}\Big(\frac12\Big)\binom{p-k-1}{j}T_\calA(\{1\}^{n-1},k+j+2)_p  \quad\text{(reversal)}\\
&\equiv - S_\calA(\{1\}^{n-1},2,k+1)_p
\end{align*}
for all primes $p>k+1$ by Lemma~\ref{lem:S2(x)} since $T_\calA(k+1)_p=0$. This confirms \eqref{equ:ht2FMTVdual-nEven}.
\end{proof}

We would like to conclude the paper by the following conjecture which is supported by overwhelming numerical evidence.

\begin{conj}\label{conj:allDuals}
We have
\begin{align*}
T_\calA(\bfs)=&\, T_\calA(\bfs^\vee),\quad  T_\sha^\Sy(\bfs)\equiv T_\sha^\Sy(\bfs^\vee) \pmod{\zeta(2)\ES_{w-2}\cap \SMTV_{w}}
\end{align*}
for the following $\bfs$:
\begin{enumerate}
    \item [\upshape{(1)}] $\bfs=(\{1\}^l,3,\{1\}^{k-l})$, where $k$ and $l$ are odd and $1\le l\le k$;
    \item [\upshape{(2)}] $\bfs=(\{1,3\}^k,1,1)$, where $k\in\N$.
\end{enumerate}
Moreover, the symmetric MTV counterpart of the results in Theorems~\ref{thm:v-DualReversal} and \ref{thm:ht2FMTVdual} should hold, i.e., for all positive integers $n$ and $k$,
\begin{align*}
T_\sha^\Sy(n,\{1\}^{k})\equiv&\, T_\sha^\Sy(k+1,\{1\}^{n-1}) \pmod{\zeta(2)\ES_{n+k-2}\cap \SMTV_{n+k}},\\
T_\sha^\Sy(n,2,\{1\}^{k})\equiv &\, T_\sha^\Sy(k+1,2,\{1\}^{n-1}) \pmod{\zeta(2)\ES_{n+k}\cap \SMTV_{n+k+2}} \quad\text{if $n+k$ is odd}.
\end{align*}

\end{conj}

\bigskip
\noindent
\textbf{Acknowledgment.} J. Zhao is supported by the Jacobs Prize from The Bishop's School.

\section*{Appendix. Space generated by symmetric multiple $T$-values}
\begin{center}
   {\large by Adam Fang, Kailin Xuan, Rudy Zhang, and Jianqiang Zhao}
\end{center}

In this appendix, we analyze Conjecture~\ref{conj:KanekoZagierMTV} in details. First,
we point out that unlike the Euler sum case in Conjecture~\ref{conj:KanekoZagierAltVersion}, one cannot replace $\SMTV_w$ by $\MTV_{w}$ because the definition of $T_\sha^\Sy(\bfs)$ involves both MTVs and MSVs when the depth of $\bfs$ is odd.

Numerical computation shows that not every symmetric MTV lies in $\MTV$. Already for $w=1$, $\SMTV_1=\SMSV_1= \log 2 \Q$ while $\MTV_1=\emptyset$. Moreover, the numerically computed dimensions for $\MTV$ in \cite{KanekoTs2020} and for $\FMTV$ in \cite{Zhao2024a} suggest that $\SMTV_w$ is much bigger than $\MTV_w$, which are both subspaces of the Euler sum space $\ES_w$. In fact, as a byproduct of our numerical verification of Conjecture~\ref{conj:KanekoZagierAltVersion} up to weight 10, we found the following conjecture.

\begin{conj}
For all weight $w\ge 1$, we have
\begin{align*}
    \MTV_w\subseteq \SMTV_w.
\end{align*}
\end{conj}

One should compare this to the result that symmetric Euler sums generate the whole space of Euler sums (see \cite{Anzawa2024}). We note that for weight $w\ge 2$, by \eqref{equ:SMTdp1}
\begin{align*}
    T_\sha^\Sy(w)=(1+(-1)^w -2^{1-w}+(-1)^w 2^{1-w} )\zeta(w).
\end{align*}
Hence,
\begin{align*}
    T(w)=(2-2^{1-w})\zeta(w)=\frac{2-2^{1-w}}{1+(-1)^w -2^{1-w}+(-1)^w 2^{1-w}}T_\sha^\Sy(w)\in \SMTV_w.
\end{align*}
But even in depth two and even weight case, it does not seem to be very easy to verify that
$T(a,b)\in \SMTV_{a+b}$ for all $a\ge 2, b\ge 1.$ However, if $a+b$ is odd then $T(a,b)$ is reducible to
$\Q$-linear combinations of depth one values so that it clearly becomes a rational multiple of $\zeta(a+b)$.

By combining the data from \cite[Table A.1]{Zhao2024a} and further computations, we have obtained Table~\ref{Table:dimMMV}.

\begin{table}[!h]
{
\begin{center}
\begin{tabular}{  |c|c|c|c|c|c|c|c|c|c|c|c|  } \hline
       $w$         & 0 & 1 &  2  &  3  &  4  &  5  &  6  & 7 & 8  &  9  &  10  \\ \hline
$\dim_\Q \FMTV_w$  & 0 & 1 &  0  &  1  &  2  &  3  &  3  & 6 & 9  &  15 &  17  \\ \hline
$\dim_\Q \SMTV_w$  & 0 & 1 &  1  &  1  &  3  &  4  &  6  & 9 & 16 &  22 &  32  \\ \hline
$\dim_\Q \MTV_w$   & 1 & 0 &  1  &  1  &  2  &  2  &  4  & 5 & 9  &  10 &  19  \\ \hline
$\dim_\Q \ES_w$    & 1 & 1 &  2  &  3  &  5  &  8  &  13 & 21& 34 &  55 &  89  \\ \hline
\end{tabular}
\end{center}
}
\caption{Conjectural dimensions (finite/symmtric) MTVs and MSVs.}
\label{Table:dimMMV}
\end{table}

We notice that $\dim\FMTV_3=\dim\SMTV_3-\dim\SMTV_1+1$ since $\zeta(2)S^\Sy(1)\not\in \SMTV_3$. Thus,
\begin{equation*}
 \FMTV_3\cong \frac{\SMTV_3}{\zeta(2)\SMTV_1\cap \SMTV_3}.
\end{equation*}
Similarly, $\dim\FMTV_7=\dim\SMTV_7-\dim\SMTV_5+1$ since $\zeta(2)S^\Sy(3,1,1)\not\in \SMTV_7$. Hence,
\begin{equation*}
 \FMTV_7\cong \frac{\SMTV_7}{\zeta(2)\SMTV_5\cap \SMTV_7}.
\end{equation*}
A new phenomenon appears in weight 8. We find that $\dim\FMTV_8=\dim\SMTV_8-\dim\SMTV_6-1$.
Since $\zeta(2)\SMTV_6\subset \SMTV_8$ we see that
\begin{equation*}
 \FMTV_8\not \cong \frac{\SMTV_8}{\zeta(2)\SMTV_6\cap \SMTV_8}.
\end{equation*}
We can guess that we need one more product with $\zeta(2)$. Indeed, we find that
\begin{multline*}
2137T_\calA(7, 1) + 246T_\calA(6, 2) + 570T_\calA(6, 1, 1) - 504T_\calA(5, 1, 1, 1) - 112T_\calA(3, 1, 3, 1) \\
- 28T_\calA(3, 1, 1, 2, 1) - 210T_\calA(1, 6, 1) + 28T_\calA(1, 3, 1, 2, 1) + 28T_\calA(1, 1, 3, 2, 1)=0
\end{multline*}
while
\begin{multline*}
2137T_\sha^\Sy(7, 1) + 246T_\sha^\Sy(6, 2) + 570T_\sha^\Sy(6, 1, 1) - 504T_\sha^\Sy(5, 1, 1, 1) - 112T_\sha^\Sy(3, 1, 3, 1) \\
- 28T_\sha^\Sy(3, 1, 1, 2, 1) - 210T_\sha^\Sy(1, 6, 1) + 28T_\sha^\Sy(1, 3, 1, 2, 1) + 28T_\sha^\Sy(1, 1, 3, 2, 1)\not\in \zeta(2)\SMTV_6.
\end{multline*}
However,
\begin{multline*}
2137T_\sha^\Sy(7, 1) + 246T_\sha^\Sy(6, 2) + 570T_\sha^\Sy(6, 1, 1) - 504T_\sha^\Sy(5, 1, 1, 1) - 112T_\sha^\Sy(3, 1, 3, 1) \\
- 28T_\sha^\Sy(3, 1, 1, 2, 1) - 210T_\sha^\Sy(1, 6, 1) + 28T_\sha^\Sy(1, 3, 1, 2, 1) + 28T_\sha^\Sy(1, 1, 3, 2, 1) \in \zeta(2)\SMTV_6\oplus \langle \zeta(2)\zeta(2,\bar1,\bar3)\rangle.
\end{multline*}
Hence, we still have
\begin{equation*}
 \FMTV_8 \cong \frac{\SMTV_8}{\zeta(2)\ES_6\cap \SMTV_8}.
\end{equation*}

As an example in weight 9, we have
\begin{align*}
&\, 367200T_\calA(\{1\}^5, 3, 1)
=12270497T_\calA(9) - 3047760T_\calA(7, 1, 1) - 543660T_\calA(6, 2, 1) - 2334440T_\calA(6, 1, 2)\\
&\,+ 252960T_\calA(6, \{1\}^3) - 2040T_\calA(5, 3, 1) + 71060T_\calA(5, 2, 2) + 550800T_\calA(1, 7, 1) + 30600T_\calA(1, 6, 2) \\
&\,+ 97920T_\calA(1, 5,  \{1\}^3) + 48960T_\calA(1, 4, 2, 1, 1)
\end{align*}
while
\begin{align*}
&\,367200T_\sha^\Sy(\{1\}^5, 3, 1)
=12270497T_\sha^\Sy(9) - 3047760T_\sha^\Sy(7, 1, 1) - 543660T_\sha^\Sy(6, 2, 1) - 2334440T_\sha^\Sy(6, 1, 2)\\
&\,+ 252960T_\sha^\Sy(6, \{1\}^3) - 2040T_\sha^\Sy(5, 3, 1) + 71060T_\sha^\Sy(5, 2, 2) + 550800T_\sha^\Sy(1, 7, 1) + 30600T_\sha^\Sy(1, 6, 2) \\
&\,+ 97920T_\sha^\Sy(1, 5,  \{1\}^3) + 48960T_\sha^\Sy(1, 4, 2, 1, 1) -  \frac{25080150881}{1764 }\zeta(2)T_\sha^\Sy(7)+ \frac{325260422}{49}\zeta(2)T_\sha^\Sy(6, 1))\\
&\,+ \frac{87870127}{98}\zeta(2)T_\sha^\Sy(5, 2))+ 2154240\zeta(2)T_\sha^\Sy(5, 1, 1) + 605880\zeta(2)T_\sha^\Sy(4, 2, 1) - 100640\zeta(2)T_\sha^\Sy(4,  \{1\}^3) \\
&\,- 269280\zeta(2)T_\sha^\Sy(1, 5, 1) - 67320\zeta(2)T_\sha^\Sy(1, 4, 2) - 269280\zeta(2)T_\sha^\Sy(1, 3, \{1\}^3).
\end{align*}
We found that the nine $\zeta(2)$-products in the above generate $\zeta(2)\SMTV_7$ whose dimension should be exactly equal to 9. Among them,
\begin{equation*}
\zeta(2)T_\sha^\Sy(4,  \{1\}^3), \zeta(2)T_\sha^\Sy(1, 5, 1)\not\in \SMTV_9,
\end{equation*}
but
\begin{equation*}
\zeta(2)\SMTV_7\subseteq \SMTV_9\oplus \big\langle\zeta(2)T_\sha^\Sy(4,  \{1\}^3), \zeta(2)T_\sha^\Sy(1, 5, 1)  \big\rangle.
\end{equation*}
Moreover, we have verified numerically that $\dim \zeta(2)\ES_7\cap \SMTV_9=7$ and
\begin{equation*}
 \FMTV_9 \cong \frac{\SMTV_9}{\zeta(2)\ES_7\cap \SMTV_9}.
\end{equation*}
Similarly, in weight 10, we see that $V=\big\langle\zeta(2)T_\sha^\Sy(1, 1, 3, 2, 1)\big\rangle\not\subset \SMTV_{10}$ and  $\zeta(2)\SMTV_8\subset \SMTV_{10}\oplus V.$ Hence, we can verify numerically that
\begin{equation*}
 \FMTV_{10} \cong \frac{\SMTV_{10}}{\zeta(2)\ES_8\cap \SMTV_{10}}.
\end{equation*}
All the evidence above strongly supports Conjecture~\ref{conj:KanekoZagierMTV}.

We now list all symmetric MTVs with weight up to 5. Note that we can omit some of them using the relation $T_\sha^\Sy(\bfs)=(-1)^{|\bfs|}T_\sha^\Sy(\overleftarrow{\bfs})$ when the depth of $\bfs$ is even. Here, $\overleftarrow{\bfs}$ is the reversal of $\bfs.$

\begin{alignat*}{4}
T_\sha^\Sy(1)=&\, 2\log 2,\quad &  T_\sha^\Sy(4)=&\,\frac{16}{15}T(4),\\
T_\sha^\Sy(2)=&\, \frac43 T(2),\quad &   T_\sha^\Sy(3,1)=&\,  -\frac34T(4)-\frac12T(2,2),\\
T_\sha^\Sy(1,1)=&\, 0,\quad &  T_\sha^\Sy(2,2)=&\, 3T(4)+2T(2,2),\\
T_\sha^\Sy(3)=&\, \frac67 T(3),\quad &  T_\sha^\Sy(2,1,1)=&\,  \frac{13}{4}T(4)-\frac12 T(2,2), \\
T_\sha^\Sy(2,1)=&\,  3T(3),\quad &  T_\sha^\Sy(1,1,2)=&\,  \frac{1}{2}T(4)+T(2,2)+3S(2,1,1), \\
T_\sha^\Sy(1,1,1)=&\, \frac67 T(3),\qquad \qquad &  T_\sha^\Sy(1,2,1)=&\,  -\frac{9}{4}T(4)+\frac12T(2,2)-3S(2,1,1).
\end{alignat*}
Note that $S(2,1,1)\not\in \MTV_4=\langle T(4),T(2,2)\rangle$ by numerical computation.
Let $a=(S(2,2,1)+M(2,\check{2},\check{1}))/2$ and $b=3S(4,1)/2-S(3,1,1)$. Then the weight five values are:
\begin{alignat*}{4}
T_\sha^\Sy(5)=&\,\frac{30}{31}T(5),\quad &  T_\sha^\Sy(2,2,1)=&\, 2T(5)-4T(4,1)+2a, \\
T_\sha^\Sy(4,1)=&\, T(5)+4T(4,1),\quad &  T_\sha^\Sy(1,2,2)=&\, -5T(5)+22T(4,1)-a-4b, \\
T_\sha^\Sy(3,2)=&\, -2T(5)-8 T(4,1),\qquad \qquad & T_\sha^\Sy(3,1,1)=&\, -\frac{1}{4}T(5)-a, \\
T_\sha^\Sy(2,1,2)=&\, T(5)+4T(4,1), \quad &
T_\sha^\Sy(1,1,3)=&\,  \frac{5}{2}T(5)-\frac{15}2 T(4,1)+b.
\end{alignat*}
The remaining weight 5 symmetric MTVs all have height 0 or 1 and thus are all rational multiples of $T(5)$ given by Theorem~\ref{thm:unitSMT} and Theorem~\ref{thm:SMT121}.

\end{document}